\newtheorem{theorem}{Theorem}
\newtheorem{lemma}{Lemma}
\newcommand{\R}{\mathbb{R}}
\newcommand{\Q}{\mathbb{Q}}
\newcommand{\Z}{\mathbb{Z}}
\newcommand{\N}{\mathbb{N}}
\DeclarePairedDelimiter\autoset{\{}{\}}
\newcommand{\set}[1]{\autoset*{#1}}
\DeclarePairedDelimiter\absolute{|}{|}
\newcommand{\abs}[1]{\absolute*{#1}}
\DeclarePairedDelimiter\autobracket{(}{)}
\newcommand{\br}[1]{\autobracket*{#1}}
\author{Tobias Hilgart}
\address{Department of Mathematics\\
	University of Salzburg\\
	A-5020 Salzburg, Hellbrunnerstraße 34\\
	Austria}
\email{tobias.hilgart@stud.sbg.ac.at}
\title[Cubic split Thue equations parametrised by linear recurrence sequences]{On families of cubic split Thue equations parametrised by linear recurrence sequences}
\keywords{parametrised Thue equations, linear recurrence sequences}
\subjclass{11D25}
\begin{document}
	
	\begin{abstract}
		Let $\br{A_n}_{n\in \N}, \br{B_n}_{n\in \N} \in \Z^{\N}$ be two linear-recurrent sequences that meet a dominant root condition and a few more technical requirements. We show that the split family of Thue equations
		\[
		\abs{X\br{X-A_n Y}\br{X-B_n Y} - Y^3} = 1
		\]
		has but the trivial solutions $\pm\set{ \br{0,1}, \br{1,0}, \br{A_n,1}, \br{B_n,1} }$, if the parameter $n$ is larger than some effectively computable constant.
	\end{abstract}
	
	\maketitle
	
	\section{Introduction}
	
		Thue equations are Diophantine equations $F(X,Y) = m$, where $F\in\Z[X,Y]$ is an irreducible form of degree at least $3$ and $m\neq 0$ a fixed integer. A. Thue showed that such equations only have finitely many solutions \cite{Thu1909}. But the methods he used were ineffective and not suitable to derive bounds on the number of solutions for concrete equations. It was Baker \cite{Bak1968} that could give such effective bounds using his theory on linear forms in logarithms. These bounds have been used and refined many times since. \par\medskip	
		
		E. Thomas studied so-called split Thue equations,
		\[
		X\prod_{i=2}^N \br{ X-p_i(a)Y } - Y^N = \pm 1,
		\]
		that are parametrised by monic polynomials $p_i \in \Z[a]$. He conjectured that, given some technical conditions for the $p_i$, the equation would only have trivial solutions immediately apparent by its form, provided the parameter $a \geq a_0$ is sufficiently large. The bound $a_0$ would also be effectively computable. The conjecture has since been proven, first by Thomas \cite{Tho1990} in the cubic and Heuberger \cite{Heu2001} in the general case. \par\medskip
		
		The methods involved rely heavily on Baker's theory of linear forms in logarithms but not so much on the parameter functions being polynomials. Hence, the authors in \cite{Hil2022} and \cite{Vuk2021} made attempts using other parameter functions, namely linear-recurrent sequences. This paper, mimicking Thomas' work on polynomial-parametrised split Thue equations, strives to proof the cubic case for when the parameter functions are linear-recurrent sequences in as much of a general setting as possible. \par\medskip
		
		Let $\br{A_n}_{n_\in \N}, \br{B_n}_{n\in\N}$ be two integer sequences that satisfy some linear recurrences with some starting conditions. We denote the characteristic polynomials of the recurrence sequences as 
		\[
		\br{X-\alpha}^{d_A}\prod_{i=1}^{m_A}  \br{ X-\alpha_i }^{d_{A,i}}, \text{ and } \br{X-\beta}^{d_B}\prod_{i=1}^{m_B}  \br{ X-\beta_i }^{d_{B,i}}
		\]
		respectively to give the algebraic multiplicity of each characteristic root. We will consider the sequences almost exclusively via their explicit formulae
		\begin{equation}\label{eq:A_n def}
			A_n = c_A(n) \alpha^n + c_{A,1}(n) \alpha_1^n + \cdots + c_{A, m_A}(n) \alpha_{m_A}^n \\
		\end{equation}
		\begin{equation}\label{eq:B_n def}
			B_n = c_B(n) \beta^n + c_{B,1}(n) \beta_1^n + \cdots + c_{B, m_B}(n) \alpha_{m_B}^n
		\end{equation}
		that always exist for linear recurrence sequences. \par\medskip
		
		Both are to admit to a dominant root condition, i.e. $\abs{\alpha} > \max_i \abs{\alpha_i}$ and $\abs{\beta} > \max_i \abs{\beta_i}$. We assume without loss of generality that $\abs{\alpha} \leq \abs{\beta}$. All characteristic roots must be algebraic over $\Q$. \par\medskip
		
		While we cannot expect the remaining terms, say $c_A$, to be constant, they are polynomials in the splitting field of the respective characteristic polynomial. In addition, the degree is bounded by the related multiplicity of the root, e.g. $\deg\br{c_A} \leq d_A - 1$. We also define
		\begin{align}\label{def:min-max-degrees}
			d_1 &:= \min\set{\deg\br{c_A}, \deg\br{c_{A,i}}, \deg\br{c_B}, \deg\br{c_{B,i}} }, \nonumber\\
			d_2 &:= \max \set{\deg\br{c_A}, \deg\br{c_{A,i}}, \deg\br{c_B}, \deg\br{c_{B,i}} }. 
		\end{align}
		
		In this setting, we prove the following result:
		
		\begin{theorem}\label{thm:main}
			The split family of Thue equations
			\begin{equation}\label{eq:thue}
				\abs{X(X-A_nY)(X-B_nY) - Y^3} = 1
			\end{equation}
			has only the trivial solutions $\set{ \pm \br{0,1}, \pm \br{A_n,1}, \pm\br{B_n,1}, \br{\pm 1, 0} }$ for sufficiently large $n$, if $A_n$ and $B_n$ meet a dominant root condition, where at least one dominant root has absolute value larger than $1$, and either
			\begin{itemize}
				\item $ 1 \leq A_n \leq B_n - 2$ or
				\item $ -1 \geq A_n \geq B_n + 3 $.
			\end{itemize}
			In case the dominant roots have equal absolute value, we also need that for sufficiently large $n$, both $\abs{c_B(n)} \neq \abs{c_A(n)}$ and one of the following conditions holds:
			\begin{itemize}
				\item $\abs{c_B(n)} > 1$ and $\abs{c_B(n)-c_A(n)} > \frac{1}{\abs{c_B(n)}}$
				\item $0 < \abs{c_B(n)} < 1$ and $0 < \abs{c_B(n)-c_A(n)} < 1$
				\item $\abs{c_B(n)} = 1$ and $\abs{c_B(n)-c_A(n)} \not\in\set{0,1}$
			\end{itemize}
		\end{theorem}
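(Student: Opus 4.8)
\medskip
\noindent\textbf{Outline of the proof.} The plan is to run Baker's method for parametrised Thue equations in the style of Thomas' treatment of the cubic split family, with the exponential growth of $\abs{B_n}$ (and of $\abs{A_n}$ when $\abs\alpha>1$) playing the role the polynomial degree plays there. One starts from the defining cubic $f(t)=t^3-\br{A_n+B_n}t^2+A_nB_n\,t-1$, whose roots $\theta^{(0)},\theta^{(1)},\theta^{(2)}$ generate the field $K=\Q(\theta)$ of the form in \eqref{eq:thue}. From $1\le A_n\le B_n-2$ --- the case $-1\ge A_n\ge B_n+3$ being symmetric --- and the dominant-root condition one derives: $f$ has three real roots, so $K$ is a totally real cubic field; the asymptotics $\theta^{(0)}=\tfrac1{A_nB_n}\br{1+o(1)}$, $\theta^{(1)}=A_n\br{1+o(1)}$, $\theta^{(2)}=B_n\br{1+o(1)}$, with matching estimates for the differences $\theta^{(i)}-\theta^{(j)}$; and that $\theta$, $\theta-A_n$, $\theta-B_n$ are units --- since $\theta(\theta-A_n)(\theta-B_n)=f(\theta)+1=1$ while $f(A_n)=f(B_n)=-1$ --- which are multiplicatively independent and generate a subgroup of $\mathcal O_K^\times$ of index bounded independently of $n$ (a regulator estimate). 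It is precisely when $\abs\alpha=\abs\beta$ that this preliminary analysis is delicate: then $\abs{A_n}\asymp\abs{B_n}$, and keeping the three roots --- and the relevant differences among them --- apart on the logarithmic scale is exactly what the side conditions on $c_A(n)$ and $c_B(n)$ are there to guarantee.

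Next one reduces \eqref{eq:thue} to a unit equation. If $(x,y)$ is a solution then $\beta:=x-\theta y\in\mathcal O_K$ has norm $\pm1$, hence is a unit; writing $\beta^{(i)}:=x-\theta^{(i)}y$, a short gap-principle computation (the case $\abs y\le1$ settled directly) shows that once $H:=\max(\abs x,\abs y)$ is large exactly one conjugate $\beta^{(k)}$ is small --- of order $H^{-2}$ up to bounded powers of the parameters --- while $\abs{\beta^{(i)}}\asymp H$ for $i\neq k$; the index $k$ is the \emph{type} of the solution. Writing $\beta=\pm\rho\,\eta_1^{b_1}\eta_2^{b_2}$ --- with $\rho$ from a fixed finite set and $\eta_1,\eta_2$ two of the three units above, chosen so that $\eta_1$ is small at the embedding $k$ --- and applying the three real embeddings, one inverts the resulting rank-$2$ logarithmic system against the asymptotics to obtain $b_1,b_2$, up to $O(1)$, as explicit rational combinations of $\log H$ over $\log\abs{A_n}$ and $\log\abs{B_n}$; in particular $M:=\max(\abs{b_1},\abs{b_2})$ and $\log H$ are comparable up to factors of $\log\abs{B_n}$.

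Now the linear form in logarithms. With $\beta^{(k)}$ the small conjugate and $i,j$ the other two indices, Siegel's identity
\[
\br{\theta^{(i)}-\theta^{(j)}}\beta^{(k)}+\br{\theta^{(j)}-\theta^{(k)}}\beta^{(i)}+\br{\theta^{(k)}-\theta^{(i)}}\beta^{(j)}=0
\]
gives $\abs\Lambda\ll H^{-3}$, up to bounded parameter powers, for $\Lambda:=\log\abs{\frac{(\theta^{(k)}-\theta^{(i)})\beta^{(j)}}{(\theta^{(j)}-\theta^{(k)})\beta^{(i)}}}$, and hence $\log\abs\Lambda\le-c_1M\log\abs{B_n}$ with an effective $c_1>0$. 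Since $\beta^{(j)}/\beta^{(i)}$ is, up to the bounded factor $\rho$, a product of $b_1$-th and $b_2$-th powers of ratios of conjugates of $\eta_1,\eta_2$, $\Lambda$ is a linear form in at most three logarithms of algebraic numbers of heights $\ll\log\abs{B_n}$ with integer coefficients $1,b_1,b_2$; a sharp lower bound for linear forms in logarithms (for instance Matveev's theorem) yields $\log\abs\Lambda\ge-c_2\br{\log\abs{B_n}}^3\log\br{eM}$. Comparison forces $M$, hence $H$, below an effectively computable function of $n$.

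It remains to rule out non-trivial solutions once $n$ is large. Expanding $\Lambda$ on the algebraic side and inserting the root asymptotics gives $\Lambda=(1-b)\lambda_n+\gamma+o(1)$, where $b\in\set{b_1,b_2}$ is the exponent of the unit $\eta_1$ small at $k$, the number $\gamma$ (contributed by $\rho$) takes finitely many values, and $\lambda_n$ is the logarithm of the type-dependent ratio of root differences --- for instance $\log\br{B_n/(B_n-A_n)}$ for the $B_n$-type. If $\abs\alpha<\abs\beta$, then $\lambda_n\neq0$ and, according to the type, $\lambda_n\to0$ or $\lambda_n\to-\infty$, in either case with $\abs{\lambda_n}\gg H^{-3}$; thus $\abs\Lambda\ll H^{-3}$ cannot match $\abs{(1-b)\lambda_n+\gamma}$, which for $(b,\gamma)\neq(1,0)$ stays $\gg H^{-3}$, so $\gamma=0$ and $b=1$, and reading $(x,y)$ off $\beta$ lands on the listed trivial solution of that type --- the three types together producing exactly $\pm(0,1),\pm(A_n,1),\pm(B_n,1),(\pm1,0)$. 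If $\abs\alpha=\abs\beta$, the $\lambda_n$ instead tend to finite nonzero limits --- which is what the hypotheses on $c_A(n)$ and $c_B(n)$ secure, across all three types --- so $b$ lies in a bounded set, and for each admissible pair $(\rho,b)$ a genuine solution would force the $\theta^2$-coefficient of $\pm\rho\,\eta_1^{b}$, reduced modulo the minimal polynomial of $\theta$, to vanish, which for $n$ large happens only for the $(\rho,b)$ giving the trivial solutions. I expect this final step --- and making it uniform in the equal-modulus case, the situation the conditions on $c_A(n)$ and $c_B(n)$ are designed to control --- to be the main technical obstacle; the root asymptotics, the reduction to a unit equation, and the single application of the linear-forms bound are comparatively routine.
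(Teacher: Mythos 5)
Your setup (root asymptotics, the three units, the type of a solution, Siegel's identity and the resulting small linear form) matches the paper's, but the way you try to close the argument has a genuine gap. Applying a linear-forms bound directly to $\Lambda$, whose logarithms have heights $\asymp\log\abs{B_n}\asymp n$, only yields $M\ll n^{2}\log n$ and hence $\log H\ll n^{3}\log n$: a polynomial-in-$n$ bound on the size of solutions, not a contradiction. The paper makes exactly this point when it remarks that the direct application of Theorem \ref{thm:baker1993} to $\Lambda$ gives no more than \eqref{eq:logy-lb1}. Consequently all of the actual content of the theorem is concentrated in your final paragraph, and that paragraph does not hold up: the expansion $\Lambda=(1-b)\lambda_n+\gamma+o(1)$ tacitly assumes the exponents are bounded, whereas for a non-trivial solution $b_1,b_2$ grow like $\log H/n$ (up to about $n^{2}\log n$ under your own bound), so the two unit terms $b_1\log\abs{\eta_1^{(j)}/\eta_1^{(i)}}$ and $b_2\log\abs{\eta_2^{(j)}/\eta_2^{(i)}}$, each potentially of size $n^{3}\log n$, must nearly cancel and cannot be absorbed into a single bounded term $\lambda_n$; the assertion that in the equal-modulus case ``$b$ lies in a bounded set'' is unsupported. (A minor slip besides: $\theta\br{\theta-A_n}\br{\theta-B_n}=1$, so the three units are multiplicatively dependent; only pairs of them are independent, which is what Theorem \ref{thm:Thomas1979} supplies.)

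The missing idea is the paper's transformation of the linear form. Using the approximations of Lemma \ref{lem:log-root-approx}, $\log\abs{\lambda_1}=n\log\abs{\beta}+\log\abs{c_B}+L\br{C\,n^{d_2}\varepsilon^n}$ and its companions, one rewrites $\Lambda$ as a linear form $\xi_j$ in the logarithms of $\abs{\alpha},\abs{\beta},\abs{c_A},\abs{c_B},\abs{c_B-c_A}$, whose heights are only $O(\log n)$, with coefficients of size $\ll n\max\set{\abs{b_1},\abs{b_2}}\ll n^{4}\log n$; here the a priori bound $\log\abs{y}\ll n^{4}\log n$ of Lemma \ref{lem:logy-upper} (Bugeaud--Gy\H{o}ry plus the regulator estimate $R=\Theta(n^{2})$ of Lemma \ref{lem:reg-square}) plays the role your third step plays. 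Baker--W\"ustholz then gives $\log\abs{\xi_j}\gg-(\log n)^{3}\log\br{n^{4}\log n}$ against the exponentially small upper bound \eqref{eq:xi_j-ub}, a genuine contradiction for large $n$. What remains, and what your sketch has no substitute for, is the non-vanishing $\xi_j\neq 0$ (Lemma \ref{lem:xinonzero}): this is precisely where the hypotheses on $c_A(n),c_B(n)$ in the equal-modulus case enter, where the exceptional possibility $b_1=b_2=-1$ is excluded by a divisibility argument, and where the case $j=1$, $\abs{\alpha}<\abs{\beta}$ is handled separately with the alternative fundamental units $\lambda_i,\lambda_i-B_n$. Without this transformed form (or an equivalent device shifting the exponential dependence on $n$ out of the heights and into the coefficients), the final elimination of non-trivial solutions does not go through.
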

	
	\section{Preliminaries}
		Even if the terms $c_B(n), c_A(n)$ are polynomials in $n$ in general, we will forego to note the dependency every time for the sake of readability and simply write $c_B \; \br{= c_B(n)}$ and $c_A \; \br{ = c_A(n)}$. \par\medskip
		
		The conditions $1\leq A_n \leq B_n-2$ or $-1 \geq A_n \geq B_n + 3$ translate to $1\leq \abs{\alpha}, \abs{\beta}$, with $\abs{\alpha} \leq \abs{\beta}$ we further get that $\abs{\beta} > 1$. We need at least one of the dominant roots to be strictly larger than $1$ so that the corresponding sequence has exponential growth. This is not really a restriction, since $\abs{\alpha} = \abs{\beta} = 1$ would mean that both sequences have at most polynomial growth and that we are essentially in the case which Thomas had already studied and solved. \par\medskip
		
		Also note that in the case of $\abs{\alpha} = \abs{\beta}$, since $\abs{c_A} \neq \abs{c_B}$ at least for sufficiently large $n$, we can conclude that $\abs{c_A} \leq \abs{c_B}$, as neither of the inequalities $1\leq A_n \leq B_n-2$ or $-1\geq A_n \geq B_n + 3$ could hold otherwise. \par\medskip
		
		We first consider the case where a solution $(x,y)$ has small $\abs{y}$.
		
		\begin{lemma}
			If a solution $(x,y)$ to the Thue-Equation \eqref{eq:thue}, for sufficiently large $n$, has $y \in \set{-1,0,1}$, then it is a trivial solution.
		\end{lemma}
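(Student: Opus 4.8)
The plan is to dispose of the three values $y \in \set{-1,0,1}$ directly, using a symmetry to halve the work. Write $F(X,Y) = X\br{X-A_nY}\br{X-B_nY} - Y^3$ for the form in \eqref{eq:thue}; a one-line computation gives $F(-X,-Y) = -F(X,Y)$, so $(x,y)$ is a solution if and only if $(-x,-y)$ is, and the claimed trivial set is invariant under $(x,y)\mapsto(-x,-y)$. Hence it suffices to treat $y = 0$ and $y = 1$. For $y = 0$ the equation collapses to $\abs{x}^3 = 1$, forcing $x = \pm 1$, so $(x,y) = \br{\pm 1, 0}$ is trivial.

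For $y = 1$ the equation reads $\abs{x\br{x-A_n}\br{x-B_n} - 1} = 1$, i.e.\ $x\br{x-A_n}\br{x-B_n} \in \set{0,2}$. If the product vanishes then $x \in \set{0, A_n, B_n}$ and $(x,1)$ is one of $\br{0,1}, \br{A_n,1}, \br{B_n,1}$. The remaining case $x\br{x-A_n}\br{x-B_n} = 2$ must be excluded for large $n$: here $x$, $x-A_n$ and $x-B_n$ are three integers with product $2$, so each has absolute value at most $2$ (a factor of absolute value at least $3$ would already force the product to have absolute value at least $3$). In particular $\abs{x}\le 2$ and $\abs{x-B_n}\le 2$, whence $\abs{B_n} = \abs{x - (x-B_n)} \le 4$. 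But $B_n = c_B(n)\beta^n + \sum_{i} c_{B,i}(n)\beta_i^n$ with $\abs{\beta} > 1 > \abs{\beta_i}$ and $c_B$ a nonzero polynomial, so $\abs{B_n}\to\infty$, and for $n$ large enough $\abs{B_n} > 4$, a contradiction; no such solution exists.

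The lemma is essentially elementary, and I do not expect a real obstacle: the only place the hypotheses enter is the growth estimate $\abs{B_n}\to\infty$, which is exactly what the dominant-root condition with $\abs{\beta} > 1$ provides (this is where having at least one dominant root of absolute value exceeding $1$ is needed). The one minor point to check is that $c_B(n)$, being a nonzero polynomial in $n$, does not vanish for all large $n$, so that $\abs{B_n}$ is eventually comparable to $\abs{c_B(n)}\,\abs{\beta}^n$; alternatively one can simply invoke that $1\le A_n\le B_n-2$ (or its mirror) fails the moment $\abs{B_n}$ stays bounded together with the dominant-root hypothesis.
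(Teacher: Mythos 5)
Your proof is correct, and its overall skeleton (split on $y$, factor the cubic, reduce to product $=0$ or $=\pm 2$, kill the nonzero case for large $n$) matches the paper's; the interesting difference is in how the nonzero case is finished. The paper argues that $x\br{x\pm A_n}\br{x\pm B_n}=\pm 2$ forces the gap $\abs{B_n-A_n}$ to be small, and then rules this out using the hypotheses $1\leq A_n\leq B_n-2$ (resp.\ $-1\geq A_n\geq B_n+3$) together with the growth of $\abs{B_n-A_n}$ — which in the case $\abs{\alpha}=\abs{\beta}$ needs the extra input $\abs{c_B}>\abs{c_A}$. You instead observe that all three integer factors divide $2$, hence $\abs{x}\leq 2$ and $\abs{x-B_n}\leq 2$, so $\abs{B_n}\leq 4$, contradicting $\abs{B_n}\to\infty$; this uses only $\abs{\beta}>1$ (which the paper derives in the preliminaries) and a nonvanishing dominant coefficient, and it avoids both the case distinction $\abs{\alpha}<\abs{\beta}$ versus $\abs{\alpha}=\abs{\beta}$ and the somewhat delicate claim about how small the gap $\abs{B_n-A_n}$ must be — so your finish is a little more robust and self-contained. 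Your use of the symmetry $F(-X,-Y)=-F(X,Y)$ to reduce to $y\in\set{0,1}$ is also fine, since the trivial solution set is invariant under $(x,y)\mapsto(-x,-y)$. One small inaccuracy: the dominant root condition gives $\abs{\beta}>\max_i\abs{\beta_i}$, not $\abs{\beta_i}<1$; this is harmless, since all you need is $\abs{\beta_i/\beta}<1$ and $c_B$ not identically zero to conclude $\abs{B_n}\sim\abs{c_B(n)}\abs{\beta}^n\to\infty$, but the inequality as you wrote it is not part of the hypotheses.
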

		\begin{proof}
			For $y = 0$, it follows immediately that $x^3 = \pm 1$, thus the only solutions with $y=0$ are trivial. If instead $y = \pm 1$, then either $X\br{X\pm A_n}\br{X\pm B_n} = 0$ or $X\br{X\pm A_n}\br{X\pm B_n} = \pm 2$.\par\medskip 
			
			The first case immediately gives $x \in \set{0, \mp A_n, \mp B_n}$ and thus that $(x,y)$ is a trivial solution. The second case is solvable for $x$ only if $\abs{B_n-A_n} \leq 2$, which can hold at most for small $n$: Since either $1\leq A_n \leq B_n -2$ or $-1\geq A_n \geq B_n + 3$, we also have $\abs{B_n-A_n}\geq 2$ and thus equality, which is impossible for $\abs{\alpha} < \abs{\beta}$ and precluded by $\abs{c_B} > \abs{c_A}$ for $\abs{\alpha}=\abs{\beta}$.
		\end{proof}
	
		\begin{lemma}\label{lem:log-ub}
			Assume $x \in \mathbb{C}$ such that $\abs{x-1} \leq 1/2$, then $\abs{\log\abs{x}} \leq 2 \abs{x-1}$.
		\end{lemma}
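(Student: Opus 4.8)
The plan is to reduce to a real-variable estimate and then bound the real logarithm directly. First I would write $\abs{\log\abs{x}} = \abs{\log\abs{x}}$ and observe that $\abs{x}$ is a positive real number, so it suffices to prove the inequality $\abs{\log t} \leq 2\abs{t-1}$ for every real $t > 0$ that can arise as $\abs{x}$ under the hypothesis $\abs{x-1} \leq 1/2$. The reverse triangle inequality gives $\abs{\abs{x}-1} \leq \abs{x-1} \leq 1/2$, hence $t := \abs{x}$ lies in the interval $[1/2, 3/2]$; this is the only consequence of the hypothesis I will use. So the task becomes: show $\abs{\log t} \leq 2\abs{t-1}$ for $t \in [1/2, 3/2]$.

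For the upper part $t \in [1, 3/2]$, I would use the standard bound $\log t \leq t - 1 \leq 2(t-1)$ for $t \geq 1$, which is immediate from concavity of $\log$ (the graph lies below its tangent at $1$). For the lower part $t \in [1/2, 1)$, writing $t = 1 - s$ with $s \in (0, 1/2]$, the claim is $-\log(1-s) \leq 2s$, i.e. $\log(1-s) \geq -2s$. Here I would either invoke the power series $-\log(1-s) = \sum_{k\geq 1} s^k/k \leq \sum_{k\geq 1} s^k = s/(1-s) \leq 2s$ since $s \leq 1/2$, or equivalently note $e^{-2s} \leq 1 - s$ on $[0,1/2]$ by checking the endpoints and convexity. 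Combining the two cases yields $\abs{\log t} \leq 2\abs{t-1} \leq 2\abs{x-1}$, using $\abs{t-1} = \abs{\abs{x}-1} \leq \abs{x-1}$ in the last step.

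The only mild subtlety — hardly an obstacle — is making sure the constant $2$ is genuinely valid across the whole range rather than just near $t=1$, where the sharp local constant is $1$; the worst case is the left endpoint $t = 1/2$, where $\abs{\log t} = \log 2 \approx 0.693$ while $2\abs{t-1} = 1$, so there is comfortable room. I expect the whole argument to be three or four lines once the reduction to $t = \abs{x} \in [1/2,3/2]$ is in place.
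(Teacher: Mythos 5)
Your proof is correct, and it is a slightly different (and more self-contained) realisation of the same elementary idea. The paper disposes of the lemma in one line -- ``use the Taylor series expansion of $\log x$ at $1$'' -- which, read literally, means expanding the complex logarithm as $\log x=\sum_{k\geq 1}(-1)^{k+1}(x-1)^k/k$, bounding $\abs{\log x}\leq \abs{x-1}/(1-\abs{x-1})\leq 2\abs{x-1}$ by the geometric series, and then using $\abs{\log\abs{x}}=\abs{\operatorname{Re}\log x}\leq\abs{\log x}$. You instead pass immediately to the real number $t=\abs{x}$ via the reverse triangle inequality ($\abs{t-1}\leq\abs{x-1}\leq 1/2$) and prove $\abs{\log t}\leq 2\abs{t-1}$ on $[1/2,3/2]$ by a case split: the tangent-line bound $\log t\leq t-1$ for $t\geq 1$, and the series/convexity bound $-\log(1-s)\leq s/(1-s)\leq 2s$ for $s\leq 1/2$. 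What your route buys is that the complex logarithm and its branch never appear, so there is nothing implicit to justify; what it costs is the two-case analysis, where the paper's direct series bound covers both sides of $1$ at once. One cosmetic remark: you announce that $t\in[1/2,3/2]$ is ``the only consequence of the hypothesis'' you will use, but in the final chain you also use $\abs{t-1}\leq\abs{x-1}$ itself (not merely the resulting interval); since you derived that inequality explicitly, this is just a matter of phrasing, not a gap.
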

		\begin{proof}
			Use the Taylor series expansion of $\log x$ at $1$.
		\end{proof}
		
		\begin{theorem}[Thomas, \cite{Tho1979}]\label{thm:Thomas1979}
			Let $F_\lambda$ be the field generated by $\lambda$, where either
			\[
			\lambda\br{\lambda-r}\br{\lambda-s} + 1 = 0, \;\;\; 1 \leq r \leq s-3
			\]
			or
			\[
			\lambda\br{\lambda-r}\br{\lambda-s} - 1 = 0, \;\;\; 1 \leq r \leq s-2.
			\]
			Then $\set{\lambda, \lambda-r}$ is a fundamental system of units for the order generated by $\set{1,\lambda, \lambda^2}$.
		\end{theorem}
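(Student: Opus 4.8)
The plan is to derive the theorem from a comparison of two regulators. Throughout, write $f(X) = X\br{X-r}\br{X-s}\pm 1$ for the defining polynomial, $\mathcal{O} = \Z[\lambda]$ for the order in question, and $U_0 = \langle -1,\lambda,\lambda-r\rangle \le \mathcal{O}^{\times}$; the goal is to show that the index $m := [\mathcal{O}^{\times}:U_0]$ equals $1$. First I would record the arithmetic of $F_\lambda$. By the rational root theorem $f$ is irreducible over $\Q$ whenever $f(\pm 1)\neq 0$, and a sign analysis of $f$ at $0$, $r$, $s$ and at its two critical points shows that $f$ has three real roots for all but finitely many admissible $(r,s)$. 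For those parameters $F_\lambda$ is a totally real cubic field, its unit group has rank $2$ and torsion $\set{\pm 1}$, and — since $\lambda\br{\lambda-r}\br{\lambda-s}=\mp 1$ — each of $\lambda$, $\lambda-r$, $\lambda-s$ lies in $\mathcal{O}^{\times}$; the finitely many remaining pairs are disposed of by a direct computation at the end.

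Next I would locate the conjugates $\lambda^{(1)},\lambda^{(2)},\lambda^{(3)}$ of $\lambda$. A Newton/Rouché argument based on the values and derivatives of $f$ at $s$, $r$, $0$ gives
\[
\lambda^{(1)} = s + O\br{\tfrac{1}{s(s-r)}}, \qquad \lambda^{(2)} = r + O\br{\tfrac{1}{r(s-r)}}, \qquad \lambda^{(3)} = O\br{\tfrac{1}{rs}},
\]
so that the conjugates of $\lambda-r$ are $\approx s-r$, $\approx \tfrac{1}{r(s-r)}$, $\approx -r$. The $2\times 2$ logarithmic determinant of $\set{\lambda,\lambda-r}$ formed from the first and third embeddings is then nonzero — which shows $\lambda$ and $\lambda-r$ are multiplicatively independent units — and of controlled size: one gets $R\br{\lambda,\lambda-r}\ll \br{\log s}^2$ with an effective constant. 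Since $R\br{\lambda,\lambda-r} = m\,R_{\mathcal{O}}$, the theorem follows once $R_{\mathcal{O}}$ is bounded below by a comparable quantity.

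For that lower bound I would argue directly inside this family rather than quote a general estimate. Let $\epsilon = u_0 + u_1\lambda + u_2\lambda^2 \in\mathcal{O}^{\times}$ with $\epsilon\neq\pm 1$, so $(u_1,u_2)\neq(0,0)$. Reading off the conjugates $\epsilon^{(i)}$ from those of $\lambda$ and Lagrange-interpolating the quadratic $u_2X^2+u_1X+u_0$ at the points $\approx s, r, 0$, a bound $\max_i\abs{\epsilon^{(i)}}\le T$ forces $\abs{u_2}\ll T/\br{r(s-r)}$ and $\abs{u_1}\ll T/\max\br{r,s-r}$; integrality of $u_1,u_2$ then yields $\max_i\abs{\epsilon^{(i)}}\gg\max\br{r,s-r}$. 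Hence the logarithmic lattice of $\mathcal{O}^{\times}$ has shortest vector of length $\gg\log\max\br{r,s-r}\gg\log s$ (using $\max\br{r,s-r}\ge s/2$ throughout the admissible range), and Minkowski's theorem in the plane gives $R_{\mathcal{O}}\gg\br{\log s}^2$. Together with the upper bound for $R\br{\lambda,\lambda-r}$ this forces $m\le m_0$ for an absolute constant $m_0$. To finish I would rule out $m = 2,\dots,m_0$: an overlattice of index $k\in\set{2,\dots,m_0}$ contains a unit $\eta\neq\pm 1$ with $\eta^{k} = \pm\lambda^{a}\br{\lambda-r}^{b}$ for integers $0\le a,b<k$ not both $0$, and one eliminates this using the signs of the conjugates of $\lambda$, $\lambda-r$, $\lambda-s$ (which obstruct certain such products being $k$-th powers in a totally real field) together with the relation $\lambda\br{\lambda-r}\br{\lambda-s}=\mp 1$. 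The finitely many small pairs $(r,s)$ left over by the asymptotic estimates — exactly where the sharp hypotheses $1\le r\le s-3$, resp.\ $1\le r\le s-2$, and the total-reality check become relevant — are settled by computing $\mathcal{O}^{\times}$ outright.

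I expect the main obstacle to be this last, quantitative step: making the constants in the upper bound for $R\br{\lambda,\lambda-r}$ and in the lower bound for $R_{\mathcal{O}}$ sharp enough that their ratio is provably below $2$ outside an explicit finite set of parameters, and then clearing that finite set along with the small-index possibilities $k = 2,\dots,m_0$. Everything upstream is a lengthy but routine unravelling of the root estimates and of geometry of numbers in the plane.
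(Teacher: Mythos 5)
First, a point of order: the paper does not prove this statement at all --- it is quoted as an external input from Thomas \cite{Tho1979} --- so there is no internal proof to compare against, and your outline has to be judged on its own merits. In spirit it is close to the classical regulator-comparison strategy that Thomas-type results use: localise the conjugates, bound the regulator of $\langle\lambda,\lambda-r\rangle$ from above, bound the regulator of the order from below, and squeeze the index. The root approximations, the upper bound $R\br{\lambda,\lambda-r}\ll\br{\log s}^2$, and the observation that a unit $\neq\pm1$ of $\Z[\lambda]$ must have a conjugate of size $\gg\max\br{r,s-r}$ are all sound (although your intermediate claim $\abs{u_1}\ll T/\max\br{r,s-r}$ is not what Lagrange interpolation gives --- for $r=1$ it only yields $\abs{u_1}\ll T$; the correct route is to force $u_2=0$ first and then use differences of conjugates).

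The genuine gap is in the quantitative core, exactly where you yourself flag the difficulty. A single shortest-vector bound combined with ``second minimum $\geq$ first minimum'' and Minkowski gives at best $R_{\mathcal O}\geq\frac{3}{4}\br{\log s}^2$ up to lower-order terms, while the subgroup regulator is asymptotically $\log r\log s+\log\br{rs}\log\br{s-r}$, which for $r\approx s/2$ is about $3\br{\log s}^2$. So the index bound your method can deliver is about $4$, not below $2$, and the whole proof then rests on eliminating the indices $2,3,4$. Your proposed elimination by signs of conjugates cannot do this: sign obstructions only constrain even exponents (squares are totally positive) and say nothing about index $3$, so the odd-index case needs genuinely different input --- either congruence/size arguments showing $\lambda^{a}\br{\lambda-r}^{b}$ is never a cube in the field, or a sharper Remak--Cusick-type lower bound for $R_{\mathcal O}$ in terms of the discriminant of the order ($\approx\br{rs\br{s-r}}^{2}$), which is what actually pushes the ratio below $2$ uniformly in $r$. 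Finally, ``the finitely many small pairs are settled by computation'' is only meaningful once every implied constant is explicit: the parameter range $1\leq r\leq s-2$ (resp.\ $s-3$) is an infinite two-parameter family, so without explicit constants the exceptional set is not a finite, checkable list. As it stands the proposal proves that the index is bounded, not that it equals $1$.
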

		
		\begin{theorem}[Bugeaud, Győry, \cite{BuGy1996}]\label{thm:bugy1996}
			Let $B \geq \max\set{\abs{m}, e}$, $\alpha$ be a root of $F(X,1)$, $K:=\Q(\alpha)$, $R:=R_K$ the regulator of $K$ and $r$ the unit rank of $K$. Let $H\geq 3$ be an upper bound for the absolute values of the coefficients of $F$ and $N$ its degree.
			
			Then all solutions $(x,y) \in \Z^2$ of the Thue equation $F(X,Y) = m$ satisfy
			\[
			\max\set{\log\abs{x},\log\abs{y}} \leq C(r,N) \cdot R \cdot \max\set{\log R, 1} \cdot \br{ R + \log(HB) },
			\]
			where $C(r,N) = 3^{r+27}(r+1)^{7r+19}N^{2N+6r+14}$.
		\end{theorem}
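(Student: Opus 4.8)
The plan is to carry out the Thomas--Heuberger scheme for split families, with the polynomial parameter replaced by the dominant-root expansions \eqref{eq:A_n def} and \eqref{eq:B_n def}. By the lemma on solutions with $y\in\set{-1,0,1}$ it suffices to rule out, for $n$ beyond an effective bound, a solution $(x,y)$ of \eqref{eq:thue} with $\abs y\ge2$. Over a splitting field the form factors as $X(X-A_nY)(X-B_nY)-Y^3=\prod_{i=1}^3\br{X-\lambda^{(i)}Y}$, where $\lambda=\lambda^{(1)}$ is a root of $g(T):=T(T-A_n)(T-B_n)-1$. After the sign substitution $X\mapsto-X$ when $-1\ge A_n\ge B_n+3$, the hypotheses put $g$ into one of the two cases of Theorem~\ref{thm:Thomas1979} with $(r,s)=\br{\abs{A_n},\abs{B_n}}$; hence $g$ is irreducible, $K:=\Q(\lambda)$ is a totally real cubic field, and $\set{\lambda,\lambda-A_n}$ is a fundamental system of units of the order $\Z[\lambda]$. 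Since $\abs{F(x,y)}=\abs{N_{K/\Q}(x-\lambda y)}=1$ and a totally real field has no roots of unity other than $\pm1$, we obtain
\[
x-\lambda y=\pm\,\lambda^{a}\br{\lambda-A_n}^{b}\qquad\text{for unique }a,b\in\Z,
\]
and the trivial solutions correspond precisely to $(a,b)\in\set{(0,0),(1,0),(0,1),(-1,-1)}$ with $\abs y\le1$; so the task reduces to excluding $\abs y\ge2$.

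The second step is an asymptotic analysis of the archimedean data. A Newton/Rouch\'e perturbation of $g$ yields three real roots with
\[
\lambda^{(1)}=\frac{1+o(1)}{A_nB_n},\qquad\lambda^{(2)}=A_n-\frac{1+o(1)}{A_n(B_n-A_n)},\qquad\lambda^{(3)}=B_n+\frac{1+o(1)}{B_n(B_n-A_n)},
\]
hence also asymptotics for $\lambda^{(i)}-A_n$, for the differences $\lambda^{(i)}-\lambda^{(j)}$, and for the regulator $R:=R_K$. Reading these off through \eqref{eq:A_n def} and \eqref{eq:B_n def}, the quantities $\log\abs{\lambda^{(2)}}$, $\log\abs{\lambda^{(3)}}$, $\log\abs{\lambda^{(3)}-A_n}$ are of order $n$, so $R\asymp n^2$, and $\log\abs{x-\lambda^{(i)}y}=a\log\abs{\lambda^{(i)}}+b\log\abs{\lambda^{(i)}-A_n}$ is affine in $(a,b)$ with coefficients of size $\Theta(n)$. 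This is precisely the point where the case $\abs\alpha=\abs\beta$ needs separate care: then $\alpha=\beta$ (both roots being positive reals, as $A_n,B_n\ge1$), so $A_n$ and $B_n$ share the same exponential scale and $B_n-A_n\sim\br{c_B(n)-c_A(n)}\alpha^n$; keeping $\abs{B_n-A_n}\to\infty$ and the three roots separated at the correct exponential rate is exactly what forces $\abs{c_B(n)}\ne\abs{c_A(n)}$ together with the three listed sub-conditions on $\abs{c_B(n)}$ and $\abs{c_B(n)-c_A(n)}$.

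Now comes the linear form in logarithms. Writing $\mu^{(i)}:=x-\lambda^{(i)}y$, affinity in $(x,y)$ gives the Siegel identity $\br{\lambda^{(2)}-\lambda^{(3)}}\mu^{(1)}+\br{\lambda^{(3)}-\lambda^{(1)}}\mu^{(2)}+\br{\lambda^{(1)}-\lambda^{(2)}}\mu^{(3)}=0$. Since $\abs{\mu^{(1)}\mu^{(2)}\mu^{(3)}}=1$, some conjugate $\mu^{(j)}$ has $\abs{\mu^{(j)}}\le1$, whence $\max\set{\abs{\mu^{(l)}}:l\ne j}\ge\abs{\mu^{(j)}}^{-1/2}$; the root asymptotics together with $\abs y\ge2$ pin down the index $j$, the \emph{type} of the solution. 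Choosing $k\ne j$ with $\abs{\mu^{(k)}}$ maximal, dividing the identity by $\br{\lambda^{(j)}-\lambda^{(i)}}\mu^{(k)}$ where $\set{i,j,k}=\set{1,2,3}$, and invoking Lemma~\ref{lem:log-ub}, one bounds
\[
\Lambda:=a\log\abs{\tfrac{\lambda^{(i)}}{\lambda^{(k)}}}+b\log\abs{\tfrac{\lambda^{(i)}-A_n}{\lambda^{(k)}-A_n}}+\log\abs{\tfrac{\lambda^{(k)}-\lambda^{(j)}}{\lambda^{(i)}-\lambda^{(j)}}}
\]
by $\abs\Lambda\le\exp\br{\tfrac32\log\abs{\mu^{(j)}}+c_0\log n}$. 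Provided $(a,b)$ avoids a thin resonant region --- automatic when $\log\abs\alpha,\log\abs\beta$ are $\Q$-linearly independent, and arranged by the hypothesis $\abs{c_A(n)}\ne\abs{c_B(n)}$ when $\alpha=\beta$ --- the right-hand exponent is of size $\Theta(n)\max\set{\abs a,\abs b}$, so $\log\abs\Lambda\le-c_1\,n\max\set{\abs a,\abs b}+c_2\log n$, while $\Lambda$ is itself a linear form in logarithms of algebraic numbers of height $\ll n$.

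It remains to produce a contradiction. Theorem~\ref{thm:bugy1996}, applied with $N=3$, $r=2$, $H\asymp\abs{A_nB_n}$, $B=e$ and $R\asymp n^2$, gives $\max\set{\log\abs x,\log\abs y}\ll n^4\log n$; feeding this into the affine relations for two values of $i$ and inverting the resulting $2\times2$ system (whose determinant is $\pm R\asymp n^2$) yields a polynomial bound $\max\set{\abs a,\abs b}\le P(n)$. Hence Baker's theory of linear forms in logarithms applies to $\Lambda$ with ``$\log B$''-parameter $\ll\log P(n)\ll\log n$ and, once the type has been used to reduce $\Lambda$ to an inhomogeneous form in essentially one nontrivial logarithm, gives $\log\abs\Lambda\ge-c_3\,n\log n$. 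Comparing with the previous upper bound forces $\max\set{\abs a,\abs b}\le C_0$ for an effectively computable absolute constant $C_0$ once $n$ is large, and for each of the finitely many remaining pairs $(a,b)$ the relation $x-\lambda y=\pm\lambda^a(\lambda-A_n)^b$ together with the root asymptotics and $\abs y\ge2$ is shown to be impossible. The genuinely delicate points I expect are the equal-modulus case $\alpha=\beta$ --- where the root asymptotics, the regulator estimate and the avoidance of resonance must all be re-established under the technical hypotheses on $c_A(n),c_B(n)$ --- and the uniform bookkeeping needed to keep every constant, hence $n_0$, effectively computable.
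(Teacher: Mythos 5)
There is a fundamental mismatch here: the statement you were asked to prove is Theorem~\ref{thm:bugy1996}, the general Bugeaud--Gy\H{o}ry bound asserting that \emph{every} solution $(x,y)\in\Z^2$ of an arbitrary Thue equation $F(X,Y)=m$ satisfies $\max\set{\log\abs{x},\log\abs{y}} \leq C(r,N)\cdot R\cdot\max\set{\log R,1}\cdot\br{R+\log(HB)}$ with the explicit constant $C(r,N)=3^{r+27}(r+1)^{7r+19}N^{2N+6r+14}$. What you have written is instead an outline of the paper's main result, Theorem~\ref{thm:main}, about the specific split family $\abs{X(X-A_nY)(X-B_nY)-Y^3}=1$. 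Nothing in your text addresses the general statement: you never work with an arbitrary irreducible form $F$ of degree $N$, never relate the solutions to a fundamental system of units whose heights are controlled by the regulator $R_K$, and never derive (or even approach) the explicit dependence on $r$, $N$, $H$ and $B$. Worse, your fourth paragraph explicitly \emph{applies} Theorem~\ref{thm:bugy1996} with $N=3$, $r=2$, $R\asymp n^2$ to get $\log\abs{y}\ll n^4\log n$, so as a proof of that very theorem the argument is circular.

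In the paper this statement is not proved at all: it is quoted from Bugeaud and Gy\H{o}ry \cite{BuGy1996} as an external input (used later, in Lemma~\ref{lem:logy-upper}, exactly the way you use it). An actual proof would require a different line of argument altogether --- reducing $F(x,y)=m$ to unit equations in the order generated by a root of $F(X,1)$, invoking the existence of a system of independent units with heights bounded in terms of $R_K$, and then applying Baker-type lower bounds for linear forms in logarithms to make every constant explicit. None of the machinery specific to the split family (Thomas's fundamental units $\set{\lambda,\lambda-A_n}$, the root asymptotics in $A_n,B_n$, the conditions on $c_A(n),c_B(n)$) is relevant to the general theorem. So the proposal, whatever its merits as a sketch of Theorem~\ref{thm:main}, does not constitute a proof of the statement in question, and the gap is not a fixable detail but a misdirected target.
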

	
		For any algebraic number $\gamma$ of degree $d$ with conjugates $\gamma^{(1)}, \dots, \gamma^{(d)}$, we define the absolute logarithmic height $h$ of $\gamma$ as
		\[
		h\br{\gamma} = \frac{1}{d} \br{ \log\abs{a} + \sum_{i=1}^d \log \max\set{\abs{\gamma^{(i)}}, 1} },
		\]
		where $a$ is the leading coefficient of the minimal polynomial of $\gamma$ over $\Z$.
		
		\begin{theorem}[Baker, W\"ustholz, \cite{BaWh1993}]\label{thm:baker1993}
			Let $\gamma_1, \dots, \gamma_t$ be algebraic numbers, not $0$ or $1$, in $K= \Q(\gamma_1,\dots,\gamma_t)$ of degree $D$, let $b_1, \dots, b_t \in \Z$, and let
			\[
			\Lambda = b_1 \log\gamma_1 + \cdots + b_t \log\gamma_t
			\]
			be non-zero. Then
			\[
			\log\abs{\Lambda} \geq - 18(t+1)!t^{t+1}(32D)^{t+2}\log(2tD) h_1 \cdots h_t \log B,
			\]
			where 
			\[
			B \geq \max\set{\abs{b_1},\dots, \abs{b_t}},
			\]
			and
			\[
			h_i \geq \max\set{ h(\gamma_i), \abs{\log\gamma_i}D^{-1}, 0.16D^{-1} } \text{ for } 1 \leq i \leq t.
			\]    
		\end{theorem}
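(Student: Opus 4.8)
The statement is the Baker--W\"ustholz explicit lower bound for a nonvanishing linear form in logarithms, and the plan is to run the Gel'fond--Baker method — auxiliary function, extrapolation, zero estimate — organised so that the final constant comes out in exactly the asserted shape. I would argue by contradiction, assuming $\abs{\Lambda}$ smaller than the claimed bound. After reordering the $\gamma_i$ we may take $\abs{b_t} = \max_i\abs{b_i} = B$; dividing the relation by $b_t$ expresses $\log\gamma_t$ as a near-$\Q$-combination of $\log\gamma_1,\dots,\log\gamma_{t-1}$ up to the multiplicative error $e^{\Lambda/b_t}$, which is extremely close to $1$. Everything takes place inside $K$ with a fixed embedding $K\hookrightarrow\mathbb{C}$ compatible with the chosen branches of the logarithms, and one clears denominators so that bounded powers of the $\gamma_i$ and of the leading coefficients of their minimal polynomials become algebraic integers.

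First I would construct the auxiliary function. By Siegel's lemma there is a nonzero polynomial $P\in\Z[Y_0,Y_1,\dots,Y_{t-1}]$, of partial degree at most $L_0$ in $Y_0$ and at most $L_i$ in $Y_i$ (parameters to be fixed), with $\log(\text{height of }P)$ bounded essentially by a linear form in $L_1 h_1,\dots,L_{t-1}h_{t-1}$ plus lower-order terms, such that the entire function
\[
\Phi(z) \;=\; P\br{z,\gamma_1^{z},\dots,\gamma_{t-1}^{z}}
\]
vanishes together with its derivatives up to order $M$ at the integer points $z = 1,2,\dots,S$. The input is the count of coefficients of $P$ against the number of vanishing conditions, and the arithmetic data are the heights of the values $\gamma_i^{k}$ for $1\le k\le S$; one balances $L_0, M, S$ and the $L_i$ so that the unknowns comfortably dominate the equations.

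Next comes the extrapolation, which is the technical core. Using the smallness of $\Lambda$, replacing $\gamma_t^{z}$ by the genuine monomial changes $\Phi$ by at most about $\abs{z}\,\abs{\Lambda/b_t}$ on a large disc, so $\Phi$ and its low-order derivatives are uniformly small there; by the Schwarz lemma and the maximum modulus principle they stay small at far more integer points than those built in. At each such point the relevant quantity is a bounded multiple of an algebraic integer of $K$ all of whose conjugates are small, hence of absolute norm $<1$, hence $0$. Thus $\Phi$ actually vanishes, to some order, at the new points. Iterating this dichotomy — each round trading vanishing order for additional vanishing points, the parameters arranged so that the gain accumulates — eventually forces $\Phi$ to vanish to high order at so many points that a zero estimate for the one-parameter subgroup $z\mapsto(z,\gamma_1^{z},\dots,\gamma_{t-1}^{z})$ of $\mathbb{G}_a\times\mathbb{G}_m^{t-1}$ cannot hold unless $P$ vanishes identically on the Zariski closure of that subgroup. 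Since a nontrivial multiplicative relation among the $\gamma_i$ can be normalised away and $\Lambda\neq 0$, this contradicts $P\neq 0$; tracing the contradiction back through the parameter choices yields the asserted inequality for $\abs{\Lambda}$.

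The hard part will be the quantitative bookkeeping: choosing $L_0, M, S, L_1,\dots,L_{t-1}$ and controlling the extrapolation induction so as to land on precisely $18(t+1)!\,t^{t+1}(32D)^{t+2}\log(2tD)\,h_1\cdots h_t\log B$ — in particular producing the factorial and the exponent $t+2$ on $32D$ with the right numerical constants. This is exactly the point at which the Baker--W\"ustholz argument replaces the ad hoc estimates by a quantitative form of W\"ustholz's analytic subgroup theorem applied to $\mathbb{G}_a\times\mathbb{G}_m^{t-1}$, which packages the zero estimate together with the multiplicity estimates cleanly; an equally workable alternative is to drop Siegel's lemma in favour of Laurent's interpolation determinants, at the cost of a comparable but slightly different constant.
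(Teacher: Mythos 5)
This statement is not proved in the paper at all: it is quoted verbatim as an external result of Baker and W\"ustholz \cite{BaWh1993}, and the paper's only ``proof'' is the citation. Your proposal, by contrast, attempts to sketch the actual proof of the Baker--W\"ustholz theorem. As a roadmap of the Gel'fond--Baker method (auxiliary polynomial via Siegel's lemma, extrapolation using the smallness of $\Lambda$, norm argument to upgrade smallness to vanishing, zero/multiplicity estimate on $\mathbb{G}_a\times\mathbb{G}_m^{t-1}$, with the 1993 paper's actual engine being a quantitative form of W\"ustholz's analytic subgroup theorem) it is broadly accurate, and you correctly identify where the real work lies.

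That, however, is also the genuine gap: the content of the theorem as stated is precisely the explicit constant $18(t+1)!\,t^{t+1}(32D)^{t+2}\log(2tD)$ together with the exact normalisation of the $h_i$ and the single factor $\log B$, and your proposal explicitly defers all of this ``quantitative bookkeeping.'' No choice of the parameters $L_0,L_1,\dots,L_{t-1},M,S$ is made, the extrapolation induction is not quantified, and the multiplicity estimate is invoked only by name, so nothing in the displayed inequality is actually established; what you have is a description of a known proof, not a proof. There are also technical points that would need repair if one carried it out as written: differentiating $\Phi(z)=P\br{z,\gamma_1^{z},\dots,\gamma_{t-1}^{z}}$ introduces transcendental factors $\log\gamma_i$, so the vanishing conditions are not directly algebraic and the classical treatment works with several variables or within the group-variety framework rather than the one-variable scheme you describe; and the step ``small conjugates, hence norm $<1$, hence $0$'' needs the denominator bookkeeping you only gesture at. For the purposes of this paper the appropriate treatment is the one the author uses: cite \cite{BaWh1993} and do not reprove it; if you do want a self-contained argument, you must carry the parameter choices and the zero estimate through to the stated numerical constant, which is a substantial piece of work in its own right.
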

	
	\section{Approximating the roots}
		We start by giving good approximations for the roots of
		\begin{equation}\label{eq:f_n def}
			f_n(X) = X(X-A_n)(X-B_n) - 1 = \br{x-\lambda_1}\br{x-\lambda_2}\br{x-\lambda_3}.
		\end{equation}
		The polynomial is irreducible over $\Q$ as per the rational root theorem, at least for sufficiently large $n$: The only candidates for a rational root are $1$ and $-1$. When we plug them in and look at either $f_n(1) = 0$ or $f_n(-1) = 0$, we get that $A_n\cdot B_n = A_n + B_n$, which can hold at most for small $n$. \par\medskip
		
		To describe the quality of the approximation, we use the $L$-notation, where for two complex functions $f,g$, we write $f = L(g)$ if $\abs{f(x)}\leq \abs{g(x)}$ for all $x \in \mathbb{C}$. This way, we can consider constants that we would lose in the more classical $O$-notation.
		
		\begin{lemma}\label{lem:root-approx}
			We have, for sufficiently large $n$:
			\begin{align*}
				\lambda_1 &= B_n + L\br{B_n^{-1}} \\
				\lambda_2 &= A_n + L\br{A_n^{-1}} = A_n + \frac{1}{ A_n\br{A_n-B_n} } + L\br{ \frac{1}{ A_n^2\br{A_n-B_n}^2 } } \\
				\lambda_3 &= \frac{1}{A_n B_n} + L\br{ \frac{1}{A_n^2B_n^2} }
			\end{align*}
		\end{lemma}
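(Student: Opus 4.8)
The plan is a bootstrapping argument. The roots of $f_n$ cluster around the roots $0$, $A_n$, $B_n$ of the unperturbed cubic $X(X-A_n)(X-B_n)$; I take $\lambda_1$ near $B_n$, $\lambda_2$ near $A_n$ and $\lambda_3$ near $0$, as in the statement. For each root I would first produce a crude real enclosure by the intermediate value theorem, and then sharpen it by substituting that enclosure back into the identity $\lambda(\lambda-A_n)(\lambda-B_n)=1$ satisfied by every root, written as $\lambda_1-B_n=\frac{1}{\lambda_1(\lambda_1-A_n)}$, $\lambda_2-A_n=\frac{1}{\lambda_2(\lambda_2-B_n)}$, and $\lambda_3=\frac{1}{(\lambda_3-A_n)(\lambda_3-B_n)}$.

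For the crude step, fix a target $r\in\set{B_n,A_n,0}$ and consider $f_n$ on the interval with endpoints $r$ and $r+\rho$, where $\rho=\tfrac{2}{B_n(B_n-A_n)}$, $-\tfrac{2}{A_n(B_n-A_n)}$, $\tfrac{2}{A_nB_n}$ respectively (the sign chosen so that $r+\rho$ lies on the side of the Newton correction of $f_n$ at $r$). The substitution $X\mapsto X+r$ turns $f_n(r+t)$ into $t\cdot(\text{quadratic in }t)-1$, with lowest-order behaviour $t\cdot(\text{product of the two shifted roots})-1$; thus $f_n(r)=-1<0$, whereas a direct sign-and-size check --- using $\abs{A_n}\ge 1$, $\abs{B_n}>1$, $\abs{B_n-A_n}\ge 2$ and, crucially, $B_n\to\infty$ --- gives $f_n(r+\rho)>0$ for $n$ large. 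Hence each of the three resulting pairwise disjoint intervals contains a real root; since $\deg f_n=3$, this simultaneously shows $\lambda_1,\lambda_2,\lambda_3$ are real and gives $\lambda_1=B_n+L(B_n^{-1})$, $\lambda_2=A_n+L(A_n^{-1})$ (absorbing the factor $2$ via $\abs{B_n-A_n}\ge 2$) and $\abs{\lambda_3}\le\tfrac{2}{A_nB_n}$.

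For the refinement I would feed these enclosures into the three displayed forms of the root identity. The enclosure for $\lambda_1$ is already as sharp as claimed, so nothing further is needed there. For $\lambda_2$, write $\lambda_2(\lambda_2-B_n)=A_n(A_n-B_n)(1+t)$ with $\abs{t}$ controlled by $\abs{A_n}^{-1}$ and $\abs{B_n-A_n}^{-1}$; the geometric expansion $\frac{1}{1+t}=1-t+\cdots$ then gives $\lambda_2=A_n+\frac{1}{A_n(A_n-B_n)}+L\!\left(\frac{1}{A_n^2(A_n-B_n)^2}\right)$. The same manoeuvre applied to $\lambda_3=\frac{1}{(\lambda_3-A_n)(\lambda_3-B_n)}$, expanding the denominator as $A_nB_n\bigl(1-\lambda_3(\tfrac{1}{A_n}+\tfrac{1}{B_n})+\tfrac{\lambda_3^2}{A_nB_n}\bigr)$, gives $\lambda_3=\frac{1}{A_nB_n}+L\!\left(\frac{1}{A_n^2B_n^2}\right)$.

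The main difficulty is not the algebra but the bookkeeping of the implied constants: each refinement leaves a remainder of the shape $\frac{C}{A_n^2(A_n-B_n)^2}$ (resp.\ $\frac{C}{A_n^2B_n^2}$) with an absolute constant $C$, and to reach the clean bounds claimed one must verify that $A_n(B_n-A_n)\ge C$ and $A_nB_n\ge C$ for $n$ large. This --- along with the sign checks in the crude step --- is where the structure of the sequences enters: $B_n\to\infty$ always, since $\abs{\beta}>1$, and moreover $A_n(B_n-A_n)\to\infty$, because either $\abs{\alpha}>1$, in which case $A_n\to\infty$, or $\abs{\alpha}=1$, which forces $\abs{\alpha}<\abs{\beta}$ and hence $B_n-A_n\sim B_n\to\infty$. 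This excludes the troublesome regime in which $A_n$ and $B_n-A_n$ both remain small --- e.g.\ $A_n=1$, $B_n=3$ --- which corresponds precisely to the finitely many $n$ ruled out by the ``sufficiently large $n$'' hypothesis.
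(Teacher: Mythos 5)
Your overall route is the paper's: each root is pinned down by a sign change of $f_n$ (the paper tests $f_n(B_n\pm B_n^{-1})$, $f_n(A_n\pm A_n^{-1})$ and $f_n\br{\tfrac{1}{A_nB_n}\pm\tfrac{1}{A_n^2B_n^2}}$ directly), and your only real deviation is to obtain the refined estimates for $\lambda_2$ and $\lambda_3$ by feeding the crude enclosure back into $\lambda_2-A_n=\frac{1}{\lambda_2\br{\lambda_2-B_n}}$ and $\lambda_3=\frac{1}{\br{\lambda_3-A_n}\br{\lambda_3-B_n}}$ rather than by a second sign-change test at the target scale. That bootstrapping is sound, the sign checks at $r+\rho$ do work in both sign regimes of $A_n,B_n$, and your observation that the three intervals are disjoint, hence catch three distinct real roots, is a point the paper leaves implicit.

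The gap is in your final paragraph on constants. A remainder of the shape $\frac{C}{A_n^2\br{A_n-B_n}^2}$ with a fixed $C>1$ is never $\le\frac{1}{A_n^2\br{A_n-B_n}^2}$, however large $A_n\br{B_n-A_n}$ is; ``verify $A_n\br{B_n-A_n}\ge C$'' is a non sequitur. What your expansion actually produces is a remainder of size roughly $\frac{1}{A_n^2\br{A_n-B_n}^2}\br{\frac{c}{\abs{A_n}}+\frac{c}{\abs{B_n-A_n}}}$, resp.\ $\frac{1}{A_n^2B_n^2}\br{\frac{c}{\abs{A_n}}+\frac{c}{\abs{B_n}}}$, so the clean constant $1$ requires $\abs{A_n}\ge 2$ eventually, not merely $A_n\br{B_n-A_n},\,A_nB_n\to\infty$ (which you did argue correctly). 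In the admissible boundary case $A_n\equiv 1$ the constant-$1$ statement is in fact marginally false: for $A_n=1$, $B_n=100$ the root near $0$ is $\approx 0.0101031$, at distance greater than $\frac{1}{A_n^2B_n^2}=10^{-4}$ from $\frac{1}{A_nB_n}$, and the refined estimate for $\lambda_2$ fails by a similar factor $1+O\br{B_n^{-1}}$; the paper's own proof shares this blemish, since its choice $\kappa=\pm\frac{1}{A_n^2B_n^2}$ dominates the constant term only when $\max\set{\abs{A_n}^{-1},\abs{B_n}^{-1}}<1$. None of this matters downstream, because Lemma \ref{lem:log-root-approx} only uses the order of magnitude of these $L$-terms, but as written your justification of the exact constants does not go through: either carry an explicit constant in the $L$-terms or add the hypothesis $\abs{A_n}\ge 2$ for large $n$.
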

		\begin{proof}
			We start with the approximation for $\lambda_1$. To that end, we first note that $f_n(B_n) = -1 < 0$. We then take a look at
			\[
			f_n(B_n \pm B_n^{-1}) = \br{B_n \pm B_n^{-1}} \br{ B_n-A_n \pm B_n^{-1} } \br{\pm B_n^{-1}} - 1,
			\]
			where we plug in $B_n + B_n^{-1}$ in case $1 \leq A_n \leq B_n - 2$ and $B_n-B_n^{-1}$ in the other. \par\medskip
			
			Now, since either $B_n-A_n \geq 2$ or $B_n - A_n \leq -3 $, we have either
			\[
			f_n(B_n + B_n^{-1}) \geq 1 + B_n^{-1} \br{ 1 + 2B_n^{-1} + B_n^{-2} } > 0
			\]
			or
			\[
			f_n(B_n - B_n^{-1}) \geq 2 - B_n^{-1} \br{ 1 + 2B_n^{-1} - B_n^{-2} } > 0
			\]
			for sufficiently large $n$. By the intermediate value theorem, a root $\lambda_1$ must lie between $B_n - B_n^{-1}$ and $B_n + B_n^{-1}$, thus $\lambda_1 = B_n + L\br{B_n^{-1}}$. \par\medskip
			
			We prove the first approximation for $\lambda_2$ analogously by showing that either $f_n(A_n - A_n^{-1}) > 0$ o r $f_n(A_n + A_n^{-1}) > 0$, depending on the sign of $A_n$. \par\medskip
			
			The second approximation for $\lambda_2$ follows analogously to the approximation for $\lambda_3$: We first write
			\[
			f_n\br{ \frac{1}{A_n B_n} + \kappa } = \br{ \frac{1}{A_n B_n} + \kappa }\br{ \frac{1}{A_n B_n} - A_n + \kappa }\br{ \frac{1}{A_n B_n} - B_n + \kappa } - 1
			\]
			and then expand the product and group by powers of $\kappa$. This then becomes
			\begin{align*}
				f_n\br{ \frac{1}{A_n B_n} + \kappa } = &\kappa \br{ A_n B_n - \frac{2}{A_n} - \frac{2}{B_n} + \frac{3}{A_n^2 B_n^2} } \\
				+ &\kappa^2\br{ -\br{ A_n+B_n } + \frac{3}{A_n B_n} } \\
				+ &\kappa^3 \\
				- & \br{ \frac{1}{A_n^2 B_n} + \frac{1}{A_n B_n^2} } + \frac{1}{A_n^3 B_n^3}.
			\end{align*}
			From this expansion, we see that the highest-order term is $\kappa A_n B_n$, as long as $\kappa$ is of order strictly greater than $\frac{1}{A_n^2B_n^2}\max\set{\frac{1}{A_n}, \frac{1}{B_n}}$. Thus, choosing $\kappa = \pm \frac{1}{A_n^2 B_n^2}$ gives alternating signs for $f_n\br{ \frac{1}{A_n B_n} + \kappa }$ and thus the claimed form for $\lambda_3$ by the intermediate value theorem.
		\end{proof}
		
		Their logarithms play as important a role as the roots themselves. We thus explicitly state their approximations too.
		
		\begin{lemma}\label{lem:log-root-approx}
			We have, up to an exponentially decreasing term $L\br{C\, n^{d_2} \varepsilon^n}$,
			\begin{align*}
				\log\abs{\lambda_1} &= n\log\abs{\beta} + \log\abs{c_B} \\
				\log\abs{\lambda_1-A_n} &=
				\begin{cases}
					n\log\abs{\beta} + \log\abs{c_B} \;\; &\text{ if } \abs{\alpha} < \abs{\beta} \\
					n \log\abs{\beta} + \log\abs{c_B-c_A} \;\; &\text{ if } \abs{\alpha} = \abs{\beta}
				\end{cases}\\ \\
				\log\abs{\lambda_2} &= n\log\abs{\alpha} + \log\abs{c_A} \\
				\log\abs{\lambda_2 - A_n} &= 
				\begin{cases}
					-n\br{\log\abs{\alpha}+\log\abs{\beta}} - \log\abs{c_A} - \log\abs{c_B} \;\; &\text{ if } \abs{\alpha} < \abs{\beta} \\
					-n\br{\log\abs{\alpha}+\log\abs{\beta}} - \log\abs{c_A} - \log\abs{c_B-c_A} \;\; &\text{ if } \abs{\alpha} = \abs{\beta} 
				\end{cases} \\ \\
				\log\abs{\lambda_3} &= -n\br{\log\abs{\alpha}+\log\abs{\beta}} - \log\abs{c_A} - \log\abs{c_B} \\
				\log\abs{\lambda_3-A_n} &= n\log\abs{\alpha} + \log\abs{c_A}
			\end{align*}
		\end{lemma}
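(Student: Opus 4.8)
The plan is to write each of the six quantities in the shape $\br{\text{main term}}\cdot\br{1+\delta}$ with $\delta$ exponentially small in $n$, pass to $\log\abs{\,\cdot\,}$, and absorb $\log\abs{1+\delta}$ using Lemma~\ref{lem:log-ub}; the remaining main terms are then $A_n$, $B_n$, $A_n-B_n$ and their reciprocals and products, whose logarithms are read off from the explicit formulae \eqref{eq:A_n def} and \eqref{eq:B_n def}. Throughout we use that $\abs{\beta}>1$ (which follows from $\abs{\alpha}\le\abs{\beta}$ and the assumption that one dominant root exceeds $1$), that $\abs{A_n}\ge 1$ and that $\abs{B_n-A_n}\ge 2$, all valid for large $n$ by the hypotheses of Theorem~\ref{thm:main}.

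First I would record that the error terms of Lemma~\ref{lem:root-approx}, taken \emph{relative} to the corresponding leading term, are exponentially small. Indeed $\abs{B_n}$ grows like $\abs{\beta}^n$ and, since $\abs{B_n-A_n}\ge 2$ while its dominant contribution is again of size $\asymp\abs{\beta}^n$, also $\abs{A_n-B_n}$ grows exponentially. Dividing the approximations in Lemma~\ref{lem:root-approx} by their leading terms gives $\lambda_1=B_n\br{1+L\br{B_n^{-2}}}$, $\lambda_1-A_n=\br{B_n-A_n}\br{1+L\br{\bigl(B_n(B_n-A_n)\bigr)^{-1}}}$, $\lambda_2=A_n\br{1+L\br{\bigl(A_n^2(A_n-B_n)\bigr)^{-1}+\cdots}}$, $\lambda_2-A_n=\frac{1}{A_n\br{A_n-B_n}}\br{1+L\br{\bigl(A_n(A_n-B_n)\bigr)^{-1}}}$, $\lambda_3=\frac{1}{A_nB_n}\br{1+L\br{\bigl(A_nB_n\bigr)^{-1}}}$, and, writing $\frac{1}{A_nB_n}-A_n=-A_n\br{1-\frac{1}{A_n^2B_n}}$, also $\lambda_3-A_n=-A_n\br{1+L\br{A_n^{-2}B_n^{-1}+\cdots}}$. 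In every case the quantity inside $L\br{\cdot}$ is $O\br{\abs{\beta}^{-n}}$, hence $<1/2$ for large $n$, so Lemma~\ref{lem:log-ub} replaces each $\log\abs{1+\delta}$ by a term of size $L\br{C\abs{\beta}^{-n}}$.

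It then remains to expand the leading terms. Writing $B_n=c_B(n)\beta^n\bigl(1+\sum_{i=1}^{m_B}\tfrac{c_{B,i}(n)}{c_B(n)}\br{\beta_i/\beta}^n\bigr)$, using that the $c$'s are polynomials of degree at most $d_2$ and that $\abs{c_B(n)}$ is bounded below by a positive constant for large $n$ (a nonzero polynomial has finitely many zeros, so the logarithms below are all defined), the bracketed sum is $L\br{Cn^{d_2}\varepsilon_B^n}$ with $\varepsilon_B:=\max_i\abs{\beta_i/\beta}<1$; a further application of Lemma~\ref{lem:log-ub} yields $\log\abs{B_n}=n\log\abs{\beta}+\log\abs{c_B}+L\br{Cn^{d_2}\varepsilon_B^n}$, and symmetrically $\log\abs{A_n}=n\log\abs{\alpha}+\log\abs{c_A}+L\br{Cn^{d_2}\varepsilon_A^n}$. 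For $B_n-A_n$ there are two cases: if $\abs{\alpha}<\abs{\beta}$ the entire sequence $A_n$ is swallowed by the error term, so $B_n-A_n=c_B(n)\beta^n\br{1+(\text{exp.\ small})}$ and $\log\abs{B_n-A_n}=n\log\abs{\beta}+\log\abs{c_B}+(\text{exp.\ small})$; if $\abs{\alpha}=\abs{\beta}$, which together with $A_n\ge 1$ (resp.\ $A_n\le-1$) excludes oscillation and forces the dominant roots to coincide, $\alpha=\beta$, one gets $B_n-A_n=\br{c_B(n)-c_A(n)}\beta^n+(\text{lower order})$ and, since $\abs{c_B-c_A}\neq 0$ by the extra hypotheses of Theorem~\ref{thm:main}, $\log\abs{B_n-A_n}=n\log\abs{\beta}+\log\abs{c_B-c_A}+(\text{exp.\ small})$; the same holds with $B_n-A_n$ replaced by $A_n-B_n$. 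Finally I would assemble the six formulae by substitution — for instance $\log\abs{\lambda_2-A_n}=-\log\abs{A_n}-\log\abs{A_n-B_n}+L\br{C\abs{\beta}^{-n}}$, into which the expansions above are plugged — and collect every exponentially decaying error into a single $L\br{Cn^{d_2}\varepsilon^n}$ with $\varepsilon:=\max\set{\varepsilon_A,\varepsilon_B,\abs{\beta}^{-1}}<1$ and $C$ large enough. The only delicate point, and the one I expect to be the main obstacle, is the near-cancellation in $B_n-A_n$ when $\abs{\alpha}=\abs{\beta}$: controlling it is exactly what forces $\alpha=\beta$ and uses $\abs{c_A}\neq\abs{c_B}$ (hence $c_A\neq c_B$), the hypotheses of Theorem~\ref{thm:main}.
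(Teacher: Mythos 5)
Your proposal is correct and follows essentially the same route as the paper: factor each root (or difference) as its main term times $1+\delta$ with $\delta$ exponentially small, apply Lemma~\ref{lem:log-ub}, expand $\log\abs{A_n},\log\abs{B_n},\log\abs{B_n-A_n}$ from the explicit formulae with the dominant-root condition, split into the cases $\abs{\alpha}<\abs{\beta}$ and $\abs{\alpha}=\abs{\beta}$, and merge all errors into one $L\br{C\,n^{d_2}\varepsilon^n}$. Your two refinements --- using the finer approximation of $\lambda_2$ so the relative error stays exponentially small even when $\abs{\alpha}=1$, and spelling out why the sign conditions force $\alpha=\beta$ when $\abs{\alpha}=\abs{\beta}$ --- are welcome but do not change the argument.
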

		\begin{proof}
			The claims follow immediately from the approximations for the roots by Lemma \ref{lem:root-approx}. We demonstrate for $\lambda_1$:
			\[
			\log\abs{\lambda_1} = \log\abs{B_n + L\br{B_n^{-1}}} = \log\abs{B_n} + \log\abs{1 + L\br{B_n^{-2}}}.
			\]
			For sufficiently large $n$, we have $\abs{B_n}^{-2} \leq \frac{1}{2}$ and thus, by Lemma \ref{lem:log-ub},
			\[
			\log\abs{\lambda_1} = \log\abs{B_n} + L\br{2\abs{B_n^{-2}}}.
			\]
			We expand the term $\log\abs{B_n}$ next:
			\begin{align*}
				\log\abs{B_n} &= \log\abs{c_B \beta^n + c_{B,1} \beta_1^n + \cdots } \\
				&= n\log\abs{\beta} + \log\abs{c_B} + \log\abs{1 + \sum_i \frac{c_{B,i}}{c_B} \br{\frac{\beta_i}{\beta}}^n }.
			\end{align*}  
			The $c_B,c_{B,i}$ are polynomials in $n$ of degree at most $d_2$, as defined in Equation \eqref{def:min-max-degrees}. The first terms in the sum are thus of order at most $n^{d_2}$. All the while, the second terms decrease exponentially because of $\abs{\beta} > \max_i\abs{\beta_i}$. Thus, the sum will have an absolute value of at most $\frac{1}{2}$ for sufficiently large $n$, and we can use Lemma \ref{lem:log-ub} again. \par\medskip
			
			We thus have proved the form of $\log\abs{\lambda_1}$, up to an error of order
			\[
			L\br{ 2 \sum_i \abs{\frac{c_{B,i}}{c_B}} \abs{\frac{\beta_i}{\beta}}^n } + L\br{2 \abs{B_n^{-2}}}.
			\]
			The first $L$-term is of strictly higher order. We can thus ignore the second $L$-term if we take the first one $3$ times instead. We then bound the rational functions $\abs{\frac{c_{B,i}}{c_B}} \leq c_1 n^{d_2}$, where $c_1$ depends only on the coefficients of the $c_B, c_{B,i}$. We also have $\abs{\frac{\beta_i}{\beta}}^n \leq \max_i \abs{\frac{\beta_i}{\beta}}^n $ trivially and thus arrive at the $L$-term
			\[
			L\br{n^{d_2} \max_i \abs{\frac{\beta_i}{\beta}}^n \cdot 3c_1 \sum_i 1} = L\br{ 3c_1m_B \cdot n^{d_2} \max_i \abs{\frac{\beta_i}{\beta}}^n },
			\]
			which decreases exponentially. \par\medskip
			
			We analogously get
			\begin{align*}
				\log\abs{\lambda_2} &= \log\abs{A_n} + L\br{2 \abs{A_n}^{-2}}\\
				&= n\log\abs{\alpha} + \log\abs{c_A} + L\br{ 3c_2 m_A \cdot n^{d_2} \max_i \abs{\frac{\alpha_i}{\alpha}}^n },
			\end{align*}
			where we bound $\abs{\frac{c_{A,i}}{c_A}} \leq c_2 n^{d_2}$. \par\medskip
			
			If we look at $\lambda_1-A_n$, we get
			\[
			\log\abs{\lambda_1-A_n} = \log\abs{B_n-A_n} + L\br{2 \abs{B_n}^{-2}}
			\]
			and must now differentiate between the cases $\abs{\alpha} < \abs{\beta}$ and $\abs{\alpha} = \abs{\beta}$. In the first case, we factorise $\abs{c_B} \abs{\beta}^n$ and get the subsequent $L$-term
			\[
			3c_3 \br{m_B+m_A+1} n^{d_2} \max_i\set{\abs{\frac{\beta_i}{\beta}}, \abs{\frac{\alpha_i}{\beta}}, \abs{\frac{\alpha}{\beta}} }^n,
			\]
			where we bound $ \abs{\max_i\set{ \frac{c_A}{c_B}, \frac{c_{A,i}}{c_B}, \frac{c_{B_i}}{c_B} }} \leq c_3 n^{d_2}$. With $\abs{\alpha}<\abs{\beta}$, the term still decreases exponentially. In the second case, we substitute $\abs{\alpha}$ for $\abs{\beta}$ in $A_n$ and factorise $\abs{ c_B - c_A } \abs{\beta}^n$ instead. This then leads to the $L$-term
			\[
			3c_4 \br{m_B+m_A} n^{d_2} \max_i\set{\abs{\frac{\beta_i}{\beta}}, \abs{\frac{\alpha_i}{\beta}}}^n,
			\]
			with $ \abs{\max_i\set{ \frac{c_A}{c_B-c_A}, \frac{c_{A,i}}{c_B-c_A}, \frac{c_{B_i}}{c_B-c_A} }} \leq c_4 n^{d_2}$. \par\medskip
			
			The remaining logarithms follow analogously, as we already know the $L$-terms that result from $\log\abs{B_n}, \log\abs{A_n}, \log\abs{B_n-A_n}$. For example,
			\begin{align*}
				\log\abs{\lambda_2 - A_n} &= \log\abs{ A_n^{-1} \br{A_n-B_n}^{-1} + L\br{ A_n^{-2} \br{A_n-B_n}^{-2} } } \\
				&= -\log\abs{A_n} - \log\abs{B_n-A_n} + L\br{ \abs{A_n}^{-3} \abs{B_n-A_n}^{-3} },
			\end{align*} 
			and we can simply sum the $L$-terms that result from $\log\abs{A_n}$ and $\log\abs{B_n-A_n}$ and increase the constant by $1$ to account for the extra $L$-term. \par\medskip
			
			We can use the same $L$-term for all approximations by putting
			\begin{align*}
				C &= 5 \max\set{c_1,c_2,c_3,c_4} \br{m_B+m_A+1}, \\
				\varepsilon &= \max_i\set{ \abs{\frac{\beta_i}{\beta}}, \abs{\frac{\alpha_i}{\beta}}, \abs{\frac{\alpha_i}{\alpha}}, \chi_{\R \backslash \set{\abs{\beta}}}(\abs{\alpha}) \abs{\frac{\alpha}{\beta}} },
			\end{align*}
			where $\chi_{\R \backslash \set{\abs{\beta}}}(\abs{\alpha}) = 0$ if $\abs{\alpha}=\abs{\beta}$ and $1$ otherwise. This way, we can use the same exponentially decreasing $L$-term $L\br{ C\, n^{d_2} \varepsilon^n }$ across all approximations.
		\end{proof}
	
		\begin{lemma}\label{lem:root-diff}
			There are effectively computable constants $c_5,c_6 > 0$ such that the difference between two distinct roots $\lambda_i, \lambda_{i'}$ is bounded by
			\begin{align*}
				c_5 \abs{\beta}^n \leq \abs{\lambda_1-\lambda_2} \leq c_6\, n^{d_2} \abs{\beta}^n \\
				c_5\, n^{d_1} \abs{\beta}^n \leq \abs{\lambda_1-\lambda_3} \leq c_6\, n^{d_2} \abs{\beta}^n \\
				c_5\, n^{d_1} \abs{\alpha}^n \leq \abs{\lambda_2-\lambda_3} \leq c_6\, n^{d_2} \abs{\alpha}^n   
			\end{align*}
		\end{lemma}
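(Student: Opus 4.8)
The plan is to reduce each of the three root differences to a quantity already estimated in Lemmas \ref{lem:root-approx} and \ref{lem:log-root-approx}, and then to invoke the elementary fact that a polynomial $p\in\R[n]$ which is not identically zero satisfies $c'n^{\deg p}\le\abs{p(n)}\le c''n^{\deg p}$ for all sufficiently large $n$, with effectively computable $c',c''>0$ depending only on the coefficients of $p$.

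First I would rewrite the differences using Lemma \ref{lem:root-approx}. Since $\lambda_2=A_n+L\br{A_n^{-1}}$ and $\lambda_3=L\br{2\abs{A_nB_n}^{-1}}\to 0$, one has $\lambda_1-\lambda_2=(\lambda_1-A_n)+L\br{A_n^{-1}}$, $\lambda_1-\lambda_3=\lambda_1+L\br{2\abs{A_nB_n}^{-1}}$ and $\lambda_2-\lambda_3=\lambda_2+L\br{2\abs{A_nB_n}^{-1}}$. By Lemma \ref{lem:log-root-approx}, $\abs{\lambda_1-A_n}$, $\abs{\lambda_1}$ and $\abs{\lambda_2}$ equal, respectively, $\abs{c_B}\abs{\beta}^n$ (or $\abs{c_B-c_A}\abs{\beta}^n$ in the equal-absolute-value case), $\abs{c_B}\abs{\beta}^n$ and $\abs{c_A}\abs{\alpha}^n$, each multiplied by a factor $1+o(1)$ coming from the exponentially small error term of that lemma; in particular each of these three quantities dominates the corresponding $L$-error, which is $O\br{\abs{\beta}^{-n}}=o(1)$ --- here we use $\abs{\beta}>1$ together with $\abs{A_n}\ge1$. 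Hence for large $n$ each difference equals its main term up to a factor in, say, $[\tfrac12,2]$, so it remains to bound $\abs{c_B}\abs{\beta}^n$, $\abs{c_B-c_A}\abs{\beta}^n$ and $\abs{c_A}\abs{\alpha}^n$ from both sides.

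Now I would apply the polynomial fact. By the definition \eqref{def:min-max-degrees}, $\deg c_A$ and $\deg c_B$ lie in $[d_1,d_2]$, so $c'n^{d_1}\le\abs{c_A(n)},\abs{c_B(n)}\le c''n^{d_2}$ for $n$ large; this yields the bounds with $n^{d_1}$ on the left for $\abs{\lambda_1-\lambda_3}$ and $\abs{\lambda_2-\lambda_3}$, and the upper bound for $\abs{\lambda_1-\lambda_2}$. For $\abs{\lambda_1-\lambda_2}$ the relevant coefficient is instead $c_B-c_A$: this polynomial is not identically zero, because the hypotheses of Theorem \ref{thm:main} force $A_n\ne B_n$, hence $c_B(n)\ne c_A(n)$, for all large $n$ --- but its degree may drop below $d_1$ if the leading terms of $c_A$ and $c_B$ cancel, so here only $c'\le\abs{c_B(n)-c_A(n)}\le c''n^{d_2}$ is available. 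This is precisely why the stated lower bound for $\abs{\lambda_1-\lambda_2}$ carries no factor $n^{d_1}$. Folding all (effectively computable) constants into $c_5$ and $c_6$ gives the claim.

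The only point requiring care is the lower bound for $\abs{\lambda_1-\lambda_2}$: Lemma \ref{lem:root-approx} by itself gives merely $\abs{\lambda_1-\lambda_2}\ge\abs{B_n-A_n}-o(1)\ge 2-o(1)$, which is far too weak, and one genuinely needs Lemma \ref{lem:log-root-approx} to know that $\abs{B_n-A_n}$ grows like $\abs{\beta}^n$; everything else is bookkeeping with the $L$-terms.
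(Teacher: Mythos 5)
Your proposal is correct and takes essentially the same route as the paper: reduce each difference to its dominant term $\abs{c_B}\abs{\beta}^n$, $\abs{c_B-c_A}\abs{\beta}^n$ or $\abs{c_A}\abs{\alpha}^n$ via the root approximations, then sandwich the polynomial coefficients between $n^{d_1}$ (respectively a positive constant for $c_B-c_A$) and $n^{d_2}$, exactly as in the paper's proof. One cosmetic remark: for $\abs{\lambda_1-\lambda_2}$ the error $L\br{\abs{A_n}^{-1}}$ is only $O(1)$ when $\abs{\alpha}=1$, not $O\br{\abs{\beta}^{-n}}$, but it is still dominated by the exponentially growing main term, so your conclusion stands.
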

		\begin{proof}
			By Lemma \ref{lem:root-approx}, the difference between two distinct roots is approximately either
			\begin{align*}
				\abs{\lambda_1 - \lambda_2} &\approx \abs{B_n-A_n} &&\approx \abs{c_B - \chi_{\set{\beta}}(\alpha) c_A} \abs{\beta}^n \\
				\abs{\lambda_1 - \lambda_3} &\approx \abs{B_n - \frac{1}{A_nB_n}} &&\approx \abs{c_B}\abs{\beta}^n \\
				\abs{\lambda_2 - \lambda_3} &\approx \abs{A_n - \frac{1}{A_nB_n}} &&\approx \abs{c_A}\abs{\alpha}^n,
			\end{align*}
			in the sense that we can bound it from below and above by the respective right-hand term, using effectively computable constants. Doing the same again, i.e. $n^{d_1} \ll \abs{c_A},\abs{c_B} \ll n^{d_2}$ and $1 \ll \abs{c_B - c_A} \ll n^{d_2}$, we can bound the difference between two distinct roots as in the statement.
		\end{proof}
	
	\section{Regulator and upper bound for $\log\abs{y}$}
		As per Theorem \ref{thm:Thomas1979}, the pairs $\lambda_i, \lambda_i-A_n$ are fundamental units of the order $\Z[\lambda_i]$. As they are generators, we can express the regulator $R$ up to sign as the determinant
		\[
		\pm R = \det
		\begin{pmatrix}
			\log\abs{\lambda_i} & \log\abs{\lambda_i-A_n} \\
			\log\abs{\lambda_{i'}} & \log\abs{\lambda_{i'} - A_n}
		\end{pmatrix},
		\]
		independent of the choice for $i,i'$.
		
		\begin{lemma}\label{lem:reg-square}
			For the regulator of the order $\Z[\lambda_i]$, we have $R = \Theta(n^2)$.
		\end{lemma}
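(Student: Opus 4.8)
The plan is to evaluate the regulator determinant directly, using the logarithmic approximations from Lemma~\ref{lem:log-root-approx}. Since the value of the determinant does not depend on the chosen pair of conjugates, I would take $i=2$, $i'=3$, giving
\[
\pm R = \log\abs{\lambda_2}\,\log\abs{\lambda_3-A_n} - \log\abs{\lambda_2-A_n}\,\log\abs{\lambda_3}.
\]
By the degree bounds on $c_A, c_B, c_{A,i}, c_{B,i}$ and by (the proof of) Lemma~\ref{lem:root-diff}, the quantities $\log\abs{c_A}$, $\log\abs{c_B}$, $\log\abs{c_B-c_A}$ are all $O(\log n)$, while the common error term $L\br{C n^{d_2}\varepsilon^n}$ is $o(1)$. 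Hence Lemma~\ref{lem:log-root-approx} yields $\log\abs{\lambda_2} = \log\abs{\lambda_3-A_n} = n\log\abs{\alpha} + O(\log n)$ and $\log\abs{\lambda_2-A_n} = \log\abs{\lambda_3} = -n\br{\log\abs{\alpha}+\log\abs{\beta}} + O(\log n)$, uniformly in both the case $\abs{\alpha}<\abs{\beta}$ and the case $\abs{\alpha}=\abs{\beta}$ (the only difference being a swap of $\log\abs{c_B}$ for $\log\abs{c_B-c_A}$, which is harmless at the $O(\log n)$ level).

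Substituting these expressions and collecting powers of $n$, I would obtain
\[
\pm R = n^2\br{ (\log\abs{\alpha})^2 - (\log\abs{\alpha}+\log\abs{\beta})^2 } + O(n\log n) = -n^2\,\log\abs{\beta}\,\log\br{\abs{\alpha}^2\abs{\beta}} + O(n\log n).
\]
Because $\abs{\alpha}\leq\abs{\beta}$ and at least one dominant root exceeds $1$ in absolute value, we have $\abs{\beta}>1$ and $\abs{\alpha}\geq 1$, so the leading coefficient $\log\abs{\beta}\,\log(\abs{\alpha}^2\abs{\beta})$ is a fixed positive constant, independent of $n$ (it degenerates to $(\log\abs{\beta})^2>0$ when $\abs{\alpha}=1$). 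Therefore $\abs{R} = n^2\log\abs{\beta}\log(\abs{\alpha}^2\abs{\beta}) + O(n\log n) = \Theta(n^2)$, which is the claim. As a consistency check one can redo the computation with the pair $\set{1,2}$ or $\set{1,3}$; it produces the same leading term.

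The only delicate point is the bookkeeping of the lower-order contributions: one must be sure that $\log\abs{c_A}, \log\abs{c_B}, \log\abs{c_B-c_A}$, being logarithms of polynomials in $n$ bounded between a positive constant and a fixed power of $n$, really are $O(\log n)$ and not of order $n$, and — more importantly — that they cannot conspire to annihilate the leading quadratic coefficient. The latter is precisely where the hypothesis $\abs{\beta}>1$ enters. Granting the approximations of Lemma~\ref{lem:log-root-approx}, everything else is a routine estimate, so I do not expect a genuine obstacle here.
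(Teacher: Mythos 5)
Your proof is correct and takes essentially the same route as the paper's: substitute the approximations of Lemma~\ref{lem:log-root-approx} into the regulator determinant and read off the leading $n^2$ coefficient $-\log\abs{\beta}\br{2\log\abs{\alpha}+\log\abs{\beta}}$, which is nonzero since $\abs{\alpha}\geq 1$ and $\abs{\beta}>1$. The only difference is the choice of conjugate pair, $(i,i')=(2,3)$ rather than the paper's $(1,2)$, which is immaterial since the determinant is independent of this choice.
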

		\begin{proof}
			We calculate the regulator using $(i,i') = (1,2)$, thus
			\[
			\pm R = \log\abs{\lambda_1}\log\abs{\lambda_2-A_n} - \log\abs{\lambda_2}\log\abs{\lambda_1-A_n}.
			\]
			With the approximations from Lemma $\ref{lem:log-root-approx}$, where we omit the $L$-terms for readability, this yields
			\begin{align*}
				\pm R &\approx \br{n\log\abs{\beta} + \log\abs{c_B}}\br{-n\br{\log\abs{\alpha}+\log\abs{\beta}}- \log\abs{c_A} - \log\abs{c_B}} \\
				&\;- \br{n\log\abs{\alpha}+\log\abs{c_A}} \br{ n\log\abs{\beta}+\log\abs{c_B} } \\
				&= -n^2 \, \log\abs{\beta}\br{ 2\log\abs{\alpha} + \log\abs{\beta} } + O\br{n \log n}
			\end{align*}
			if $\abs{\alpha} < \abs{\beta}$. If instead $\abs{\alpha} = \abs{\beta}$, we have
			\begin{align*}
				\pm &R \approx \br{n\log\abs{\alpha}+\log\abs{c_A}} \br{-n\br{\log\alpha+\log\abs{\beta}} - \log\abs{c_A} - \log\abs{c_B-c_A}} \\
				&\;- \br{n\log\abs{\alpha}+\log\abs{c_A}} \br{ n\log\abs{\beta} + \log\abs{c_B-c_A} } \\
				&= -n^2 \, \log\abs{\beta}\br{ 2\log\abs{\alpha} + \log\abs{\beta} } + O\br{n \log n}.
			\end{align*}
			In both cases, the regulator is quadratic in $n$.
		\end{proof}
	
		If we use the regulator $R$ being quadratic, which is then a quadratic upper bound to the regulator of the number field, we immediately get from Theorem \ref{thm:bugy1996}:
		
		\begin{lemma}\label{lem:logy-upper}
			We have
			\[
			\log\abs{y} \ll n^4\log n.
			\]
		\end{lemma}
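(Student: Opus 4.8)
The plan is to feed the Thue equation \eqref{eq:thue} directly into the Bugeaud–Győry bound of Theorem \ref{thm:bugy1996} and to estimate each quantity occurring on its right-hand side by a power of $n$, up to logarithmic factors. Writing \eqref{eq:thue} in homogeneous form as $F(X,Y) = X^3 - (A_n+B_n)X^2Y + A_nB_nXY^2 - Y^3 = \pm 1$, we have $m = \pm 1$, so $B := e$ is admissible, and $N := \deg F = 3$. The field $K := \Q(\lambda_1)$ generated by a root of $F(X,1) = f_n(X)$ is totally real for sufficiently large $n$, since the intermediate-value arguments in the proof of Lemma \ref{lem:root-approx} produce three real roots $\lambda_1,\lambda_2,\lambda_3$. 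Hence the unit rank of $K$ is $r = 2$, and the constant $C(r,N) = C(2,3)$ in Theorem \ref{thm:bugy1996} is absolute.

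Next I would bound the parameters $H$ and $R$. The coefficients of $F$ are $1$, $-(A_n+B_n)$, $A_nB_n$ and $-1$; by the explicit formulae \eqref{eq:A_n def} and \eqref{eq:B_n def} their absolute values are at most of order $n^{2d_2}\abs{\alpha\beta}^n$, so one may take $H \ll n^{2d_2}\abs{\alpha\beta}^n$, whence $\log(HB) \ll n$ since $B = e$. For the regulator, Lemma \ref{lem:reg-square} shows that the regulator of the order $\Z[\lambda_1]$ is $\Theta(n^2)$; since the regulator of an order is the regulator $R_K$ of the maximal order times the positive integer index $[\mathcal{O}_K^\times : \Z[\lambda_1]^\times]$, this yields $R = R_K \ll n^2$.

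It then remains to assemble the estimates. The right-hand side of the inequality in Theorem \ref{thm:bugy1996} is monotonically increasing in $R$ for $R \geq 1$, so it is legitimate to substitute an upper bound $R \leq cn^2$ rather than the exact value; doing so gives
\[
\log\abs{y} \leq \max\set{\log\abs{x},\log\abs{y}} \leq C(2,3)\cdot cn^2 \cdot \max\set{\log(cn^2),1}\cdot\br{cn^2 + \log(HB)}.
\]
Since $\max\set{\log(cn^2),1} \ll \log n$ and $cn^2 + \log(HB) \ll n^2$, the right-hand side is $\ll n^4\log n$, as claimed.

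The only real subtlety — hardly a true obstacle — is the bookkeeping around the regulator: one must justify feeding a quadratic \emph{upper} bound for $R_K$ into Theorem \ref{thm:bugy1996} in place of its exact value (hence the remark on monotonicity in $R$), and one must note that the order regulator computed in Lemma \ref{lem:reg-square} is an integer multiple of the field regulator, so that it indeed bounds $R_K$ from above. Everything else is the routine substitution of the $O$-estimates for $H$, $B$ and $R$ into the displayed inequality.
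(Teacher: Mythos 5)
Your proposal is correct and follows essentially the same route as the paper: bound the field regulator by the $\Theta(n^2)$ order regulator from Lemma \ref{lem:reg-square}, take $r=2$, note $\log H \ll n$ from the coefficients $A_n+B_n$ and $A_nB_n$, and substitute into Theorem \ref{thm:bugy1996} to get $\ll n^2\cdot\log n\cdot n^2 = n^4\log n$. The extra bookkeeping you supply (monotonicity in $R$, the order-regulator versus field-regulator index, total realness giving $r=2$) is exactly what the paper leaves implicit, and it is handled correctly.
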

		\begin{proof}
			Let $K = \Q(\lambda_1)$. By Lemma \ref{lem:reg-square}, we have $R_K \ll n^2$. The unit rank is $r=2$. The highest coefficient of the Thue equation is given by $A_n B_n$, which is at most of order $c_A \, c_B\, \beta^{2n}$. We plug everything into the upper bound from Theorem \ref{thm:bugy1996}, and as the highest-order term is of the form $n^4 \log n$, the claim follows.
		\end{proof}
	
	\section{Constructing the linear form in logarithms}
		We now set out to construct a linear form in logarithms, on which we want to use Baker's and W\"ustholz's lower bound to then deduce a lower bound for $\log\abs{y}$. To that end, we start with the expressions
		$x-\lambda_i y$ and fix the index $j$, sometimes called the type of the solution $(x,y)$, such that
		\[
		\abs{x-\lambda_j y} \leq \abs{x-\lambda_i y} \;\;\; \forall i \in \set{1,2,3},
		\]
		and denote the other two indices by $k$ and $l$, i.e. $\set{j,k,l} = \set{1,2,3}$. First, we ascertain that the $x-\lambda_i y$ give units in $\Z[\alpha_i]$.
		
		\begin{lemma}
			For each $i \in \set{1,2,3}$, we have $x-\lambda_i y \in \br{\Z[\lambda_i]}^\times$.
		\end{lemma}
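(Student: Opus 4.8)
The plan is to use the defining Thue equation \eqref{eq:thue} together with the factorisation $f_n(X) = X(X-A_nY)(X-B_nY) - Y^3 = \prod_{i=1}^3 (X-\lambda_i Y)$ when $Y \neq 0$. Fix a solution $(x,y)$ with $y \neq 0$ (the case $y=0$ being covered by the first lemma of the Preliminaries). Evaluating the form at $(x,y)$ gives $\prod_{i=1}^3 (x-\lambda_i y) = \pm 1$, so the product of the three algebraic integers $x - \lambda_i y$ is a unit; I would first record that each factor $x-\lambda_i y$ lies in $\Z[\lambda_i]$, which is immediate since $\lambda_i$ is an algebraic integer (it is a root of the monic polynomial $f_n$) and $x,y \in \Z$.

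Next I would argue that each individual factor is in fact a unit of $\Z[\lambda_i]$, not merely that the product over the three conjugates is $\pm 1$. The point is that $x - \lambda_i y$ and its two conjugates over $\Q$ are precisely $x - \lambda_1 y$, $x - \lambda_2 y$, $x - \lambda_3 y$ (since $f_n$ is irreducible over $\Q$ for sufficiently large $n$, as already established, and the $\lambda_i$ are the full Galois orbit). Hence the norm $N_{\Q(\lambda_i)/\Q}(x - \lambda_i y) = \prod_{i'} (x - \lambda_{i'} y) = f_n(x/y)\cdot y^3 / y^3$; more directly, $N_{\Q(\lambda_i)/\Q}(x-\lambda_i y) = \pm(x(x-A_ny)(x-B_ny) - y^3) = \pm 1$ by \eqref{eq:thue}. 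An algebraic integer of the order $\Z[\lambda_i]$ whose field norm is $\pm 1$ is a unit of that order: its inverse is, up to sign, the product of its remaining conjugates, which is again an algebraic integer expressible as a polynomial in $\lambda_i$ with rational-integer coefficients (using that $x - \lambda_i y$ generates a principal ideal whose norm is a unit, or concretely writing $1/(x-\lambda_i y) = \pm (x-\lambda_k y)(x-\lambda_l y)$ and expanding, noting the elementary symmetric functions of the $\lambda$'s are in $\Z$). So $x - \lambda_i y \in (\Z[\lambda_i])^\times$ for each $i \in \{1,2,3\}$.

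The main subtlety — really the only place care is needed — is the claim that the inverse of $x-\lambda_i y$ again lies in the order $\Z[\lambda_i]$ and not just in the maximal order of $\Q(\lambda_i)$: a norm-one element of a non-maximal order need not be a unit of that order in general. Here it works because $1/(x-\lambda_i y) = \pm(x - \lambda_k y)(x - \lambda_l y)$ where $\{i,k,l\}=\{1,2,3\}$, and the right-hand side, being a product of the two $\Q$-conjugates of $x-\lambda_i y$ lying in the splitting field, is fixed by the subgroup of the Galois group fixing $\lambda_i$ and is an algebraic integer; expanding $(x-\lambda_k y)(x-\lambda_l y) = x^2 - (\lambda_k+\lambda_l)xy + \lambda_k\lambda_l y^2$ and using $\lambda_k + \lambda_l = (\lambda_1+\lambda_2+\lambda_3) - \lambda_i = (A_n+B_n) - \lambda_i$ and $\lambda_k\lambda_l = \lambda_i^{-1}\cdot(\lambda_1\lambda_2\lambda_3) \cdot \text{(sign)}$ together with $\lambda_1\lambda_2\lambda_3 = 1$ and $\lambda_1\lambda_2 + \lambda_1\lambda_3 + \lambda_2\lambda_3 = A_nB_n$, one sees after a short computation that this quantity is a $\Z$-linear combination of $1, \lambda_i, \lambda_i^2$, hence lies in $\Z[\lambda_i]$. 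Therefore $x - \lambda_i y$ is invertible inside $\Z[\lambda_i]$, completing the proof.
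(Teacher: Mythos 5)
Your proof is correct and takes essentially the same route as the paper: compute $N_{\Q(\lambda_i)/\Q}(x-\lambda_i y)=\prod_{i'}\br{x-\lambda_{i'}y}=y^3 f_n\br{\tfrac{x}{y}}$, identify this with the left-hand side of the Thue equation, conclude the norm is $\pm 1$, and hence each $x-\lambda_i y$ is a unit of $\Z[\lambda_i]$. The extra care you take about the inverse lying in the (possibly non-maximal) order is fine but is in fact automatic: since $x-\lambda_i y\in\Z[\lambda_i]$ has integral characteristic polynomial with constant term $\pm 1$, its inverse is a $\Z$-polynomial in $x-\lambda_i y$ and therefore already lies in $\Z[\lambda_i]$.
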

		\begin{proof}
			We take the Norm $N = N_{\Q(\lambda_1) / \Q }$, which is the product of the conjugates, i.e.
			\[
			N(x-\lambda_i y) = \br{x-\lambda_j y}\br{x-\lambda_k y}\br{x-\lambda_l y}.
			\]
			Now the right hand side is $y^3f_n\br{\frac{x}{y}}$ and thus exactly the left hand side of our Thue equation at $(x,y)$. Thus, $\abs{N(x-\lambda_i y)} = 1$, from which the claim follows.
		\end{proof}
	
		As the $x-\lambda_i y$ are units in $\mathbb{Z}[\lambda_i]$, we can write them in terms of the fundamental units $\lambda_i, \lambda_i - A_n$. Let thus
		\begin{equation}\label{eq:unitdecomp}
			x-\lambda_i y = \pm \lambda_i^{b_1} \br{\lambda_i-A_n}^{b_2}
		\end{equation}
		for each $i \in \set{1,2,3}$. We then take a look at Siegel's identity,
		\[
		\br{x-\lambda_j \, y} \br{ \lambda_k - \lambda_l } + \br{x- \lambda_l \, y} \br{ \lambda_j - \lambda_k } + \br{x-\lambda_k \, y} \br{ \lambda_l - \lambda_j } = 0,
		\]
		which we rewrite into
		\[
		\frac{x-\lambda_l y}{x-\lambda_k y} \cdot \frac{\lambda_j-\lambda_k}{\lambda_j-\lambda_l} = 1 + \frac{x - \lambda_j y}{x - \lambda_k y} \cdot \frac{\lambda_l - \lambda_k}{\lambda_j - \lambda_l} =: 1 + \gamma.
		\]
		We take the absolute value and use our decomposition into fundamental units on the first term of the left hand side, thus defining the linear form in logarithms
		\[
			\Lambda :=  b_1 \log\abs{ \frac{\lambda_l}{\lambda_k} } + b_2 \log\abs{ \frac{\lambda_l-A_n}{\lambda_k-A_n} } + \log\abs{ \frac{\lambda_j-\lambda_k}{\lambda_j-\lambda_l} }  = \log\abs{1+\gamma},
		\]
		which is very small, as $\gamma$ is very small. In fact:
		\begin{lemma}\label{lem:Lambda-ub}
			We have
			\[
			\abs{\gamma} \leq 2c_5^3\, n^{-d_1} \abs{\alpha}^{-2n} \abs{\beta}^{-n}
			\]
			and, if $2c_5^3\, n^{-d_1} \abs{\alpha}^{-2n} \abs{\beta}^{-n} \leq \frac{1}{2}$,
			\[
			\abs{\Lambda} \leq 4c_5^3\, n^{-d_1} \abs{\alpha}^{-2n} \abs{\beta}^{-n}.
			\]
		\end{lemma}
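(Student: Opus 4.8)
The plan is to estimate $\abs{\gamma}$ first, after which the bound on $\abs{\Lambda}$ follows in one line from Lemma \ref{lem:log-ub}. Recall
\[
\gamma=\frac{x-\lambda_j y}{x-\lambda_k y}\cdot\frac{\lambda_l-\lambda_k}{\lambda_j-\lambda_l},
\]
where the type $j$ is chosen so that $\abs{x-\lambda_j y}$ is the smallest of the three $\abs{x-\lambda_i y}$, and recall that we may assume $\abs{y}\geq 2$, since $y\in\set{-1,0,1}$ gives only trivial solutions. Because $\br{x-\lambda_j y}\br{x-\lambda_k y}\br{x-\lambda_l y}=\pm 1$, we may replace $x-\lambda_j y$ by $\pm\br{\br{x-\lambda_k y}\br{x-\lambda_l y}}^{-1}$ and obtain
\[
\abs{\gamma}=\frac{\abs{\lambda_l-\lambda_k}}{\abs{x-\lambda_k y}^{2}\,\abs{x-\lambda_l y}\,\abs{\lambda_j-\lambda_l}}.
\]

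Next I would lower-bound $\abs{x-\lambda_k y}$ and $\abs{x-\lambda_l y}$. For $i\in\set{k,l}$, minimality of $\abs{x/y-\lambda_j}$ gives $\abs{\lambda_j-\lambda_i}\leq\abs{x/y-\lambda_j}+\abs{x/y-\lambda_i}\leq 2\abs{x/y-\lambda_i}$, hence $\abs{x-\lambda_i y}\geq\frac{1}{2}\abs{y}\,\abs{\lambda_j-\lambda_i}$. Plugging this in and using $\abs{y}^{3}\geq 8$ to cancel the factor $8$ that appears yields the clean estimate
\[
\abs{\gamma}\leq\frac{\abs{\lambda_l-\lambda_k}}{\abs{\lambda_j-\lambda_k}^{2}\,\abs{\lambda_j-\lambda_l}^{2}}.
\]

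It then remains to insert Lemma \ref{lem:root-diff} and to run through the three values of $j$. In each case $\set{\abs{\lambda_j-\lambda_k},\abs{\lambda_j-\lambda_l}}$ is one of the three pairs bounded in that lemma, so the denominator is at least a constant multiple of $n^{2d_1}\abs{\alpha}^{2n}\abs{\beta}^{2n}$; the numerator $\abs{\lambda_l-\lambda_k}$ is controlled by $n^{d_2}\abs{\beta}^n$ for $j\in\set{2,3}$ and by the smaller $n^{d_2}\abs{\alpha}^n$ for $j=1$, and for $j=1$ there is the additional saving of a factor $\br{\abs{\alpha}/\abs{\beta}}^{3n}$ whenever $\abs{\alpha}<\abs{\beta}$. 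Assembling these — and, in the boundary case $\abs{\alpha}=\abs{\beta}$, working with the genuine degrees and leading coefficients of $c_A$, $c_B$ and $c_B-c_A$ instead of the crude bounds $n^{d_1}\leq\cdot\leq n^{d_2}$ — gives $\abs{\gamma}\leq 2c_5^{3}\,n^{-d_1}\abs{\alpha}^{-2n}\abs{\beta}^{-n}$. Finally, if this quantity is at most $\frac{1}{2}$, then $\abs{\br{1+\gamma}-1}=\abs{\gamma}\leq\frac{1}{2}$, so Lemma \ref{lem:log-ub} applied to $1+\gamma$ gives $\abs{\Lambda}=\abs{\log\abs{1+\gamma}}\leq 2\abs{\gamma}\leq 4c_5^{3}\,n^{-d_1}\abs{\alpha}^{-2n}\abs{\beta}^{-n}$.

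The step I expect to be the main obstacle is this last assembly when $\abs{\alpha}=\abs{\beta}$: here there is no exponential gap between the roots to spend, so one must keep track of the exact polynomial orders of $c_A$, $c_B$ and $c_B-c_A$ (using, for instance, that $\deg\br{c_B-c_A}=\max\set{\deg c_A,\deg c_B}$ unless the leading terms cancel, in which case $\deg c_A=\deg c_B$) and verify that the powers of $n$ in numerator and denominator really combine down to at worst $n^{-d_1}$. When $\abs{\alpha}<\abs{\beta}$ the factor $\br{\abs{\alpha}/\abs{\beta}}^{3n}$ overwhelms every polynomial factor and the estimate is essentially free.
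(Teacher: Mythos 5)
Up to your displayed inequality $\abs{\gamma}\leq\abs{\lambda_l-\lambda_k}\,\abs{\lambda_j-\lambda_k}^{-2}\abs{\lambda_j-\lambda_l}^{-2}$ you are doing exactly what the paper does (norm equal to $\pm 1$, minimality of $\abs{x-\lambda_j y}$, $\abs{y}\geq 2$), and your deduction of the bound for $\abs{\Lambda}$ from Lemma \ref{lem:log-ub} is also the paper's. The gap is in the final assembly. You estimate the numerator $\abs{\lambda_l-\lambda_k}$ by the \emph{upper} bound $c_6\,n^{d_2}\abs{\beta}^n$ (resp. $c_6\,n^{d_2}\abs{\alpha}^n$) of Lemma \ref{lem:root-diff} and the denominator from below by $c_5^4\,n^{2d_1}\abs{\alpha}^{2n}\abs{\beta}^{2n}$; this yields $\abs{\gamma}\ll n^{d_2-2d_1}\abs{\alpha}^{-2n}\abs{\beta}^{-n}$, which does not give the claimed $n^{-d_1}$ bound whenever $d_2>d_1$, and the surplus $n^{d_2-d_1}$ has nothing to absorb it: for $j\in\set{2,3}$ the exponential parts of your bound and of the target already agree exactly, so there is no factor $\br{\abs{\alpha}/\abs{\beta}}^{3n}$ to spend. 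Your diagnosis that only the boundary case $\abs{\alpha}=\abs{\beta}$ is delicate is therefore off --- the same defect occurs for $j=2,3$ even when $\abs{\alpha}<\abs{\beta}$ --- and the sketched repair (tracking the genuine degrees and leading coefficients of $c_A$, $c_B$, $c_B-c_A$) is neither carried out nor needed.

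The missing step is the paper's one extra move: apply the triangle inequality to the numerator, $\abs{\lambda_l-\lambda_k}\leq\abs{\lambda_l-\lambda_j}+\abs{\lambda_k-\lambda_j}$, so that it cancels against the denominator and
\[
\abs{\gamma}\leq\abs{\lambda_k-\lambda_j}^{-2}\abs{\lambda_l-\lambda_j}^{-1}+\abs{\lambda_k-\lambda_j}^{-1}\abs{\lambda_l-\lambda_j}^{-2}.
\]
Now only the \emph{lower} bounds of Lemma \ref{lem:root-diff} are needed: for every choice of $j$, each summand is a product of three reciprocals of root differences involving $j$, hence (using $\abs{\alpha}\leq\abs{\beta}$ and $n^{d_1}\geq 1$) at most a three-fold power of $c_5$ times $n^{-d_1}\abs{\alpha}^{-2n}\abs{\beta}^{-n}$, which is exactly the constant appearing in the statement; neither $c_6$ nor $n^{d_2}$ ever enters, and no case distinction on $\abs{\alpha}$ versus $\abs{\beta}$ or on the degrees of $c_A$, $c_B$, $c_B-c_A$ is required.
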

		\begin{proof}
			As
			\[
			\gamma = \frac{x - \lambda_j y}{x - \lambda_k y} \cdot \frac{\lambda_l - \lambda_k}{\lambda_j - \lambda_l},
			\]
			we have to bound all four terms. For $i \neq j$, we have 
			\[
			\abs{y} \abs{\lambda_i-\lambda_j} \leq \abs{x-\lambda_i y} + \abs{x-\lambda_j y} \leq 2 \abs{x-\lambda_i y}.
			\]
			In combination with $\abs{y} \geq 2$, we thus have
			\[
			\abs{x-\lambda_k y } \geq  \abs{\lambda_k - \lambda_j}
			\]
			and, as the product $\br{x-\lambda_j y}\br{x-\lambda_k y}\br{x-\lambda_l y} = \pm 1$,
			\[
			\abs{x-\lambda_j y} = \abs{x-\lambda_k y}^{-1} \abs{x - \lambda_l y}^{-1} \leq  \abs{\lambda_k-\lambda_j}^{-1}\abs{\lambda_l - \lambda_j}^{-1}.
			\]
			We put everything together and get
			\begin{align*}
				\abs{\frac{x-\lambda_j y}{x-\lambda_k y}} \cdot \abs{ \frac{\lambda_l - \lambda_k}{\lambda_j - \lambda_l} } \leq \frac{\abs{\lambda_k-\lambda_j}^{-1}\abs{\lambda_l-\lambda_j}^{-1}}{\abs{\lambda_k-\lambda_j}} \cdot \frac{ \abs{\lambda_l - \lambda_j} + \abs{\lambda_k - \lambda_j} }{\abs{\lambda_l - \lambda_j}},
			\end{align*}
			where we also used the triangle inequality on the numerator of the second fraction. The right hand side is now equal to
			\[
			\abs{\lambda_k-\lambda_j}^{-2}\abs{\lambda_l-\lambda_j}^{-1} + \abs{\lambda_k-\lambda_j}^{-1}\abs{\lambda_l-\lambda_j}^{-2} \leq 2c_5^3\, n^{-d_1} \abs{\alpha}^{-2n} \abs{\beta}^{-n}
			\]
			as per Lemma \ref{lem:root-diff}, and the bound decreases exponentially even if $\abs{\alpha} = 1$.
		\end{proof}
		
		We'd like to use the lower bound from Theorem \ref{thm:baker1993} directly on $\Lambda$. However, as the arguments of some of the logarithms are exponential in $n$, their logarithmic heights are linear in $n$. This will give $\log\abs{\Lambda} \gg -n^3  \log\br{ \frac{1}{n} \log\abs{y} }$, from which we can only deduce
		\begin{equation}\label{eq:logy-lb1}
			\log\abs{y} \gg n
		\end{equation}
		instead of the desired exponential lower bound. \par\medskip
		
		We thus aim to shift the exponential dependency on $n$ from the logarithms into the coefficients, where an additional factor $n$ will not ruin the argument.
		
	\subsection{Transforming the linear form}
		Let $j = 1$ and choose $(k,l) = (3,2)$. We use the approximations from Lemma \ref{lem:log-root-approx} to write:
		\begin{align*}
			\log\abs{\frac{\lambda_l}{\lambda_k}} &+ L\br{2C\, n^{d_2} \varepsilon^n} = \log\abs{\lambda_2} - \log\abs{\lambda_3} + L\br{2C\, n^{d_2} \varepsilon^n} \\
			&= n \br{2\log\abs{\alpha}+\log\abs{\beta}} + 2\log\abs{c_A} + \log\abs{c_B}\\ \\
			\log\abs{\frac{\lambda_l-A_n}{\lambda_k-A_n}} &+ L\br{2C\, n^{d_2} \varepsilon^n} = \log\abs{\lambda_2-A_n} - \log\abs{\lambda_3-A_n} + L\br{2C\, n^{d_2} \varepsilon^n} \\
			&= 
			\begin{cases}
				-n \, \br{2\log\abs{\alpha}+\log\abs{\beta}} - 2\log\abs{c_A} - \log\abs{c_B} \;\; &\text{ if } \abs{\alpha} < \abs{\beta} \\
				-n \, \br{2\log\abs{\alpha}+\log\abs{\beta}} - 2\log\abs{c_A} - \log\abs{c_B-c_A} \;\; &\text{ if } \abs{\alpha} = \abs{\beta}
			\end{cases}\\ \\
			\log\abs{\frac{\lambda_k-\lambda_j}{\lambda_l-\lambda_j}} &+ L\br{4C\, n^{d_2} \varepsilon^n}  = \log\abs{\lambda_3-\lambda_1} - \log\abs{\lambda_2-\lambda_1} + L\br{4C\, n^{d_2} \varepsilon^n} \\
			&=
			\begin{cases}
				0 \;\; &\text{ if } \abs{\alpha} < \abs{\beta} \\
				\log\abs{c_B} - \log\abs{c_B-c_A} \;\; &\text{ if } \abs{\alpha} = \abs{\beta}
			\end{cases}
		\end{align*}
		We shift the $L$-terms into the upper bound, and get a linear form in at most $4$ logarithms. The coefficients are gathered in the table below, where $\chi_M$ denotes the characteristic function on the set $M$.
		
		\begin{table}[H]
			\begin{tabular}{l|l}
				& $j=1$  \\ \hline
				$\log\abs{\alpha}$ & $2n\br{b_1-b_2}$ \\
				$\log\abs{\beta}$ & $n\br{b_1-b_2}$ \\
				$\log\abs{c_A}$ & $2\br{b_1-b_2}$ \\
				$\log\abs{c_B}$ & $b_1 - \chi_{\R\backslash\set{\abs{\beta}}}\br{\abs{\alpha}} b_2 + \chi_{\set{\abs{\beta}}}\br{\abs{\alpha}}$ \\
				$\log\abs{c_B-c_A}$ & $\chi_{\set{\abs{\beta}}}\br{\abs{\alpha}} \br{b_2+1}$ 
			\end{tabular}
		\end{table}
		
		We do the same for the cases $(j,k,l) = (2,3,1)$ and $(j,k,l) = (3,2,1)$ and get:
		
		\begin{table}[H]
			\begin{tabular}{l|ll}
				& $j=2$ & $j=3$ \\ \hline
				$\log\abs{\alpha}$ & $n\br{b_1-\br{b_2-1}}$  & $n\br{b_2 - \br{b_1-1}}$ \\
				$\log\abs{\beta}$ & $n\br{2b_1+b_2-1}$ & $n\br{2b_2 + b_1-1}$ \\
				$\log\abs{c_A}$ & $b_1-\br{b_2-1}$ & $b_2 + -\br{b_1-1}$ \\
				$\log\abs{c_B}$ & $2b_1 + \chi_{\R\backslash\set{\abs{\beta}}}\br{\abs{\alpha}}\br{b_2-1}$ & $\chi_{\R\backslash\set{\abs{\beta}}}\br{\abs{\alpha}} 2b_2 - 1$ \\
				$\log\abs{c_B-c_A}$ & $\chi_{\set{\abs{\beta}}}\br{\abs{\alpha}}\br{b_2-1}$ & $\chi_{\set{\abs{\beta}}}\br{\abs{\alpha}} 2b_2$
			\end{tabular}
		\end{table}	
		
		We call the linear form in the logarithms and coefficients as per the tables above $\xi_j$. As we shifted the $L$-terms into the upper bound, we have to worsen it accordingly, we thus have
		\begin{equation}\label{eq:xi_j-ub}
			\abs{\xi_j} \leq 4c_5^3\, n^{-d_1} \abs{\alpha}^{-2n} \abs{\beta}^{-n} + 6C\, n^{d_2} \varepsilon^n \ll n^{d_2} \varepsilon^n.
		\end{equation}
	
	\subsection{A closer look at the powers}
		To apply Baker's and W\"ustholz's lower bound on the linear form $\xi$, we have to argue $\xi \neq 0$ first. For that, we have to either restrict the logarithms, such as demanding linear independence or gain further information on the coefficients, and thus on $b_1,b_2$. \par\medskip
		
		To that end, we return to the equations
		\[
			\abs{x-\lambda_i y} = \abs{\lambda_i^{b_1} \br{\lambda_i-A_n}^{b_2}} \;\; \forall i \in \set{1,2,3}.
		\]
		If we take the logarithm, the equations for $i = k,l$ become
		\begin{equation}\label{eq:powerLGS1}
			\begin{pmatrix}
				\log\abs{x-\lambda_k y} \\
				\log\abs{x-\lambda_l y}
			\end{pmatrix}
			=
			\begin{pmatrix}
				\log\abs{\lambda_k} & \log\abs{\lambda_k-A_n} \\
				\log\abs{\lambda_l} & \log\abs{\lambda_l-A_n}
			\end{pmatrix}
			\begin{pmatrix}
				b_1 \\
				b_2
			\end{pmatrix}.
		\end{equation}
		The $2\times 2$ matrix on the right hand side is invertible, as its determinant is equal to the regulator $R$, which is non-zero as per Lemma \ref{lem:reg-square}. We multiply the equation with the inverse matrix
		\[
			\frac{1}{R}
			\begin{pmatrix}
				\log\abs{\lambda_l-A_n} & -\log\abs{\lambda_k-A_n} \\
				-\log\abs{\lambda_l} & \log\abs{\lambda_k}
			\end{pmatrix}
		\]
		to get an explicit formula for $b_1,b_2$. We further break down the formula by writing
		\begin{align*}
			\log\abs{x-\lambda_i y} &= \log\abs{x-\lambda_j y - \br{\lambda_i-\lambda_j} y} \\
			&= \log\abs{y} + \log\abs{\lambda_i-\lambda_j} + \log\br{ 1 - \frac{x-\lambda_j y}{\br{\lambda_i-\lambda_j}y} }
		\end{align*}
		for $i = k,l$. The last term is small, as seen in the proof of Lemma $\ref{lem:Lambda-ub}$. By writing
		\[
			\frac{\abs{x-\lambda_j}}{\abs{\lambda_i-\lambda_j}\abs{y}} \leq \frac{\abs{\lambda_k-\lambda_j}^{-1}\abs{\lambda_l-\lambda_j}^{-1}}{2\abs{\lambda_i-\lambda_j}} \leq \frac{1}{2} c_5^3\, n^{-d_1} \abs{\alpha}^{-2n} \abs{\beta}^{-n}
			\]
			as per Lemma \ref{lem:root-diff}, we thus have
			\[
			\log\abs{x-\lambda_i y} = \log\abs{y} + \log\abs{\lambda_i-\lambda_j} + L\br{\frac{1}{2} c_5^3\, n^{-d_1} \abs{\alpha}^{-2n} \abs{\beta}^{-n}}.
		\] 
		
		We use this expression to rewrite Equation \eqref{eq:powerLGS1}, multiplied with the inverse matrix, into
		\begin{equation}\label{eq:powerLGS2}
			\begin{pmatrix}
				b_1 \\
				b_2
			\end{pmatrix}
			=
			\frac{1}{R}
			\begin{pmatrix}
				\log\abs{\lambda_l-A_n} & -\log\abs{\lambda_k-A_n} \\
				-\log\abs{\lambda_l} & \log\abs{\lambda_k}        
			\end{pmatrix}
			\begin{pmatrix}
				\log\abs{y} + \log\abs{\lambda_k-\lambda_j} \\
				\log\abs{y} + \log\abs{\lambda_l-\lambda_j}
			\end{pmatrix},
		\end{equation}
		up to an error term $L\br{C'(n)\abs{\alpha}^{-2n}\abs{\beta}^{-n}}$ with
		\[
		C'(n) := \frac{1}{R} \log\abs{\frac{\lambda_k \br{\lambda_l-A_n}}{\lambda_l \br{\lambda_k-A_n}}} c_5^3 n^{-d_1},
		\]
		and we mainly care that the $L$-term decreases exponentially and that $C'(n) \ll 1$. \par\medskip
		
		In the case that $j=1$ and $\abs{\alpha}<\abs{\beta}$ only, we can actually forego to fiddle around with the linear form and directly derive an exponential lower bound for $\log\abs{y}$. To that end, we use a different set of fundamental units, $\lambda_i$ and $\lambda_i - B_n$. They still fall under the results of Theorem \ref{thm:Thomas1979}, and Equation \eqref{eq:powerLGS2} holds for the new set of powers $u_1, u_2$ if we substitute $A_n$ for $B_n$ in the matrix.
		
		\begin{lemma}
			If $j = 1$ and $\abs{\alpha} < \abs{\beta}$, then $\log\log\abs{y} \gg n$.
		\end{lemma}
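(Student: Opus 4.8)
\emph{Overview and Step 1 (forcing $b_1=b_2$).} The plan is to show that for a solution of type $j=1$ with $\abs\alpha<\abs\beta$ the two fundamental–unit exponents must coincide, which collapses the linear form in logarithms to one in only two logarithms whose arguments lie very close to $1$; playing its smallness against its explicit asymptotics then pins the remaining integer exponent down so rigidly that the asserted (in fact much stronger) bound on $\abs y$ follows. Concretely: for $j=1$ and $\abs\alpha<\abs\beta$ the coefficient table collapses, so that
\[
\xi_1=(b_1-b_2)\br{2n\log\abs\alpha+n\log\abs\beta+2\log\abs{c_A}+\log\abs{c_B}},
\]
whose bracket is $\Theta(n)$ since $2\log\abs\alpha+\log\abs\beta>0$. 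As $\abs{\xi_1}\ll n^{d_2}\varepsilon^n$ by \eqref{eq:xi_j-ub}, we get $\abs{b_1-b_2}\ll n^{d_2-1}\varepsilon^n$, hence $b_1=b_2=:b$ for $n$ beyond an effective bound. Using $\lambda_i(\lambda_i-A_n)=(\lambda_i-B_n)^{-1}$ (from $f_n(\lambda_i)=0$), the decomposition \eqref{eq:unitdecomp} now reads $x-\lambda_i y=\pm(\lambda_i-B_n)^{u_2}$ with $u_2:=-b$, i.e.\ the $B_n$--version of \eqref{eq:powerLGS2} holds with $u_1=0$.

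\emph{Step 2 (ruling out small $u_2$, and $\log\abs y\gg n(u_2-1)$).} Since $x-\lambda_1 y$ is the smallest of three numbers of product $\pm1$ we have $\abs{x-\lambda_1y}\le1$; as $\abs{\lambda_1-B_n}<1$ this forces $u_2\ge1$. If $u_2=1$ then $x\pm B_n=\lambda_1(y\pm1)$ forces $y=\mp1$, a trivial solution barred by $\abs y\ge2$; hence $u_2\ge2$. Taking absolute logarithms in $x-\lambda_1y=\pm(\lambda_1-B_n)^{u_2}$ and using $\prod_i(x-\lambda_iy)=\pm1$ together with Lemmas \ref{lem:root-approx} and \ref{lem:root-diff} (whence $\abs{\lambda_1-B_n}\asymp\abs\beta^{-2n}$ up to polynomial factors, and $\abs{x-\lambda_iy}\ll n^{d_2}\abs\beta^{\,n}\abs y$ for $i\ne1$) gives $\log\abs y\gg n(u_2-1)$. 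So it suffices to force $u_2$ to be exponentially large in $n$, or equivalently to reach a contradiction (which also proves the then vacuous statement).

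\emph{Step 3 (the size comparison).} With $b_1=b_2=b$ and $(k,l)=(3,2)$ the Siegel form collapses, again via $\lambda_i(\lambda_i-A_n)=(\lambda_i-B_n)^{-1}$, to the two--logarithm form
\[
\Lambda=b\log\abs{\tfrac{\lambda_3-B_n}{\lambda_2-B_n}}+\log\abs{\tfrac{\lambda_1-\lambda_3}{\lambda_1-\lambda_2}}=:b\,\rho_n+\sigma_n ,
\]
and for a type--$1$ solution $\abs\Lambda$ is not merely $\le 4c_5^3n^{-d_1}\abs\alpha^{-2n}\abs\beta^{-n}$ (Lemma \ref{lem:Lambda-ub}) but in fact $\ll n^{d_2}\abs\beta^{-3n}$, because $\abs{\lambda_1-\lambda_2},\abs{\lambda_1-\lambda_3}\asymp\abs\beta^{\,n}$ in this case. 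Feeding Lemma \ref{lem:root-approx} into $\rho_n$ and $\sigma_n$ shows that both equal $\tau_n:=-\log(1-A_n/B_n)$ up to a term $\delta_n$ that is smaller by an exponential factor, with $\tau_n\asymp\abs{A_n/B_n}\asymp\tfrac{\abs{c_A}}{\abs{c_B}}\br{\abs\alpha/\abs\beta}^n>0$ (the $\lambda_i$ are real, $0<A_n/B_n<1$). Hence $\Lambda=(b+1)\tau_n+O(u_2\,\abs{\delta_n})$, so
\[
(u_2-1)\,\tau_n\le\abs\Lambda+O\bigl(u_2\,\abs{\delta_n}\bigr).
\]
Since $\tau_n$ exceeds both $\abs\Lambda$ and $\abs{\delta_n}$ by an exponential-in-$n$ factor, this inequality can survive only for $u_2$ bounded; substituting the bound back (and, with the sharp estimate $\abs\Lambda\ll n^{d_2}\abs\beta^{-3n}$, squeezing down to $u_2=1$) contradicts Step 2. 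Equivalently, a hypothetical solution would have to satisfy $\log u_2\gg n$, hence $\log\abs y\gg n(u_2-1)$ forces $\log\log\abs y\gg n$.

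\emph{Main obstacle.} The entire weight lies in Step 3: one must expand $\rho_n$ and $\sigma_n$ far enough through Lemma \ref{lem:root-approx} to see that their leading — and in fact their next — terms cancel, determine the true size of $\delta_n=\rho_n-\sigma_n$ and verify it is exponentially smaller than $\tau_n$, and keep the three scales $\tau_n$, $\abs\Lambda$, $\abs{\delta_n}$ correctly separated. This separation is most delicate when $\abs\alpha=1$, where only the \emph{sharp} bound $\abs\Lambda\ll n^{d_2}\abs\beta^{-3n}$ (rather than the generic one of Lemma \ref{lem:Lambda-ub}) produces the required gap; it also uses that the dominant roots $\alpha,\beta$ are positive and $0<A_n/B_n<1$, so that all ratios in sight are positive reals.
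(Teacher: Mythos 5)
Your argument is essentially correct, but it takes a genuinely different route from the paper. The paper also passes to the fundamental system $\set{\lambda_i,\lambda_i-B_n}$, but then works with the exponent $u_1$ via the inverse of the matrix in \eqref{eq:powerLGS1}: the coefficient of $\log\abs{y}$ in $R\,u_1$ is exponentially small while the other terms decay, so $u_1\neq 0$ forces $\abs{Ru_1}\geq R\gg n^2$ and hence $\log\abs{y}\gg c^n$ --- exactly the stated bound, using only \eqref{eq:logy-lb1}. You instead extract $b_1=b_2$ from the collapsed form of $\xi_1$ together with \eqref{eq:xi_j-ub} (equivalently $u_1=0$ in the $B_n$-system; note this is the opposite of what the paper's proof derives from its hypothetical type-$1$ solution, which is harmless since both statements are conditional on an object that ultimately does not exist), reduce to $x-\lambda_iy=\pm(\lambda_i-B_n)^{u_2}$ with $u_2\ge 2$, and then squeeze $u_2\le 1$ out of the collapsed Siegel identity, so the lemma holds vacuously because type-$1$ solutions are excluded altogether --- a stronger conclusion than the paper's. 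What your route buys is that it avoids the somewhat delicate non-vanishing argument for $Ru_1$; what it costs is Step 3, and your write-up is loose exactly where you say the weight lies: with only the crude estimate $\lambda_2=A_n+L(A_n^{-1})$ one gets $\abs{\rho_n-\tau_n}\ll 1/\br{\abs{A_n}\abs{B_n}}$, which is \emph{not} exponentially smaller than $\tau_n\asymp\abs{A_n}/\abs{B_n}$ when $\abs{\alpha}=1$, and not smaller at all if moreover $c_A$ is constant (e.g.\ $A_n\equiv 1$, $B_n=2^n$), in which case the displayed inequality with the error $O\br{u_2\abs{\delta_n}}$ gives no bound on $u_2$. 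One must use the refined second expansion of $\lambda_2$ from Lemma \ref{lem:root-approx} and $\lambda_1-B_n=\br{\lambda_1\br{\lambda_1-A_n}}^{-1}$ to get $\abs{\rho_n-\tau_n},\abs{\sigma_n-\tau_n}\ll\abs{B_n}^{-2}$; then $\delta_n/\tau_n\to 0$ exponentially, and with your sharp estimate $\abs{\Lambda}\ll n^{d_2}\abs{\beta}^{-3n}$ (which is indeed indispensable when $\abs{\alpha}=1$, since the generic bound of Lemma \ref{lem:Lambda-ub} need not be $o(\tau_n)$ there) the squeeze $u_2\le 1$ goes through in all admissible cases. Two trivial repairs: $\abs{x-\lambda_1y}\le 1$ only yields $u_2\ge 0$, and $u_2=0$ must be excluded by the irrationality of $\lambda_1$; and the bound $\log\abs{y}\gg n\br{u_2-1}$ is not actually needed once the contradiction is reached.
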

		\begin{proof}
			We take a closer look at $\pm R\,u_1$, which is
			\[
			\br{\log\abs{\lambda_2-B_n}-\log\abs{\lambda_3-B_n}} \log\abs{y} + t(n) + L\br{ C'(n)\abs{\alpha}^{-2n}\abs{\beta}^{-n} },
			\]
			where we only care that $\abs{t(n)}$ at most decreases exponentially per Lemma \ref{lem:log-root-approx}, so $\abs{t(n)} \ll c_7^{-n}$. \par\medskip
			
			We have $c_8^{-n} \ll \abs{ \log\abs{\lambda_2-B_n}-\log\abs{\lambda_3-B_n} } \ll c_9^{-n} $ with the same argument. By Equation \eqref{eq:logy-lb1} we also have $\log\abs{y} \gg n$, while $C'(n) \ll 1$. Thus, $\abs{Ru_1} \neq 0$, hence $\abs{Ru_1} \geq R$ for sufficiently large $n$. \par\medskip
			
			This, however, means that
			\[
			\log\abs{y} \geq \frac{R - t(n) - C'(n)\abs{\alpha}^{-2n}\abs{\beta}^{-n} }{ \abs{ \log\abs{\lambda_2-B_n}-\log\abs{\lambda_3-B_n} } } \gg c_{10}^n,
			\]
			and if $c_7,c_8,c_9 > 1$, so is $c_{10}$.
		\end{proof}
		
		\begin{lemma}\label{lem:xinonzero}
			For $j \in \set{2,3}$ or $j=1,\, \abs{\alpha} = \abs{\beta}$, we have $\xi_j \neq 0$.
		\end{lemma}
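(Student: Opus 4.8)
Suppose, for a contradiction, that $\xi_j = 0$. Reading off the coefficient tables, $\xi_j$ is an affine-linear function of the unit exponents $b_1, b_2$ whose coefficients split into a \emph{dominant part} — carrying an explicit factor $n$ and being a $\Z$-linear combination of $\log\abs{\alpha}$ and $\log\abs{\beta}$ — and a \emph{sub-leading part} in $\log\abs{c_A}$, $\log\abs{c_B}$, $\log\abs{c_B-c_A}$, all of which are $O(\log n)$ since $c_A, c_B$ are polynomials in $n$. The plan is to argue that $\xi_j = 0$ forces $(b_1, b_2)$ into a short explicit list of small pairs, each of which contradicts $\abs{y}\geq 2$ exactly as in the first lemma.

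First I would bound the exponents. From the explicit formula \eqref{eq:powerLGS2}, the estimate $R = \Theta(n^2)$ of Lemma~\ref{lem:reg-square}, the approximations of Lemmas~\ref{lem:log-root-approx} and~\ref{lem:root-diff} (all occurring root-logarithms and logarithmic root-differences being $O(n)$), and $\log\abs{y} \ll n^4\log n$ from Lemma~\ref{lem:logy-upper}, one gets $\max\set{\abs{b_1}, \abs{b_2}} \ll n^3\log n$, so the integer coefficients appearing in the dominant part of $\xi_j$ are $\ll n^3\log n$ as well.

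Next, being a vanishing $\Z$-linear combination of logarithms of the nonzero algebraic numbers $\abs{\alpha}, \abs{\beta}, \abs{c_A}, \abs{c_B}, \abs{c_B-c_A}$, the relation $\xi_j = 0$ exponentiates to a multiplicative relation among them. Because the dominant root has $\abs{\beta} > 1$, the quantity $\abs{c_B}\abs{\beta}^n$ tends to infinity; and in the case $\abs{\alpha} = \abs{\beta}$ the trichotomy of conditions on $\abs{c_B}$ and $\abs{c_B-c_A}$ is designed precisely to rule out the ``boundary'' relations, i.e.\ to keep the relevant surviving sub-leading logarithm (such as $\log\abs{c_B(c_B-c_A)}$, or $\log\abs{c_A/(c_B-c_A)}$, according to $j$) bounded away from $0$. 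Together with the bound $\max\set{\abs{b_1},\abs{b_2}} \ll n^3\log n$, this forces the factor-$n$ part of $\xi_j$ to vanish, which amounts to one (for $j = 1$: $b_1 = b_2$) or two (for $j\in\set{2,3}$) $\Z$-linear relations between $b_1$ and $b_2$. Substituting these back, using the conditions on the $c$'s again and, where necessary, a further appeal to \eqref{eq:powerLGS2}, the equation $\xi_j = 0$ then forces $(b_1, b_2)$ into a finite explicit list of small pairs, such as $(0,1)$, $(1,0)$ and $(-1,-1)$, depending on $j$ and on the case.

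Finally, each such pair is excluded directly. From $f_n(\lambda_i) = 0$ we have $\lambda_i\br{\lambda_i - A_n}\br{\lambda_i - B_n} = 1$, hence $\lambda_i^{-1}\br{\lambda_i - A_n}^{-1} = \lambda_i - B_n$; so for each of the listed pairs the equality $x - \lambda_i y = \pm\lambda_i^{b_1}\br{\lambda_i - A_n}^{b_2}$ exhibits $x - \lambda y$, modulo $f_n$, as $\pm$ a polynomial of degree at most one in $\lambda$. Since this polynomial identity holds at the three distinct roots $\lambda_1, \lambda_2, \lambda_3$ and $x - \lambda Y$ has degree exactly one in $\lambda$, comparing coefficients yields $y \in \set{-1, 0, 1}$, contradicting $\abs{y} \geq 2$. \textbf{The main obstacle} I anticipate is the third step: extracting a clean linear relation between $b_1$ and $b_2$ out of $\xi_j = 0$ while controlling, uniformly in $n$, all the $O(\log n)$-sized sub-leading logarithms — which is exactly what the somewhat technical trichotomy on $\abs{c_B}$ and $\abs{c_B-c_A}$ is tailored for — and pushing the bookkeeping through the case split $\abs{\alpha} = \abs{\beta}$ versus $\abs{\alpha} < \abs{\beta}$ together with the three sub-cases of that trichotomy.
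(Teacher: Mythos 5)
There is a genuine gap at the central step of your argument, namely the inference that ``$\xi_j=0$ together with $\max\set{\abs{b_1},\abs{b_2}}\ll n^3\log n$ and the trichotomy on $\abs{c_B},\abs{c_B-c_A}$ forces the factor-$n$ part of $\xi_j$ to vanish.'' This does not follow. First, the factor-$n$ part is of the form $n\br{u\log\abs{\alpha}+v\log\abs{\beta}}$ with $u,v$ integer combinations of $b_1,b_2$; since $\log\abs{\alpha}$ and $\log\abs{\beta}$ may be $\Q$-linearly dependent (e.g. $\beta=\alpha^2$, or $\abs{\alpha}=1$), a nonzero pair $(u,v)$ need not make this part large, or even nonzero. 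Second, even when the factor-$n$ part is $\gg n$, it does not dominate the sub-leading part: the coefficients of $\log\abs{c_A},\log\abs{c_B},\log\abs{c_B-c_A}$ are themselves integer combinations of $b_1,b_2$, hence (with only your upper bound) possibly of size $n^3\log n$, multiplied by logarithms of size $\log n$ — i.e. potentially of size $n^3\br{\log n}^2$, far larger than $n$. So no domination or exponentiation argument closes the loop from the upper bound on $b_1,b_2$ alone; indeed, multiplicative relations such as $\abs{A_n}^{u}\abs{B_n}^{v}\approx 1$ with $(u,v)\neq(0,0)$ are not excluded in general (take $\beta=\alpha^2$), and ruling them out requires knowing what $b_1,b_2$ actually are. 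Consequently your subsequent reduction of $(b_1,b_2)$ to a short list of small pairs such as $(0,1),(1,0),(-1,-1)$ is asserted rather than proved, and the whole chain collapses.

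The missing ingredient is the one the paper's proof is built on: Equation \eqref{eq:powerLGS2}, combined with Lemma \ref{lem:log-root-approx} and $R=\Theta(n^2)$, gives asymptotic \emph{values} of $b_1$ and $b_2$ (explicit expressions in $n$ and $\log\abs{y}$ up to exponentially small errors), not merely size bounds. Plugging these into the coefficient tables, the paper checks case by case ($j=1,2,3$; $\abs{\alpha}<\abs{\beta}$ versus $\abs{\alpha}=\abs{\beta}$) that the dominant coefficient combinations — e.g. $b_1-b_2$ for $j=1$, or $n\br{b_1-b_2+1}\log\abs{\alpha}+n\br{2b_1+b_2-1}\log\abs{\beta}$ for $j=2$ — cannot make $\xi_j$ vanish; for $\abs{\alpha}=\abs{\beta}$ this needs a further dichotomy on whether $\log\abs{y}=O(n)$ and a sign analysis using the hypotheses on $\abs{c_A},\abs{c_B},\abs{c_B-c_A}$. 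Only in the case $j=1$, $\abs{\alpha}=\abs{\beta}$ does the argument reduce to a single exceptional pair $b_1=b_2=-1$, which is then excluded by the polynomial-divisibility trick you sketch at the end (comparing $\br{-yX+x}X\br{X-A_n}-1$ with the minimal polynomial to force $y=-1$); that final step of yours is sound in spirit, but it rests on a reduction you have not established.
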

		\begin{proof}
			Assume $j=1$ and take $(k,l) = (3,2)$. By Equation \eqref{eq:powerLGS2} and Lemma \ref{lem:log-root-approx}, the powers $b_1, b_2$ are then, up to exponentially decreasing errors,
			\begin{align*}
				R\,b_1 = &\big(-n\br{\log\abs{\alpha}+\log\abs{\beta}} - \log\abs{c_A} - \chi_{\R\backslash\set{\abs{\beta}}}(\abs{\alpha})\log\abs{c_B} \\ &-\chi_{\set{\abs{\beta}}}(\abs{\alpha})\log\abs{c_B-c_A} \big)\br{ \log\abs{y} + n\log\abs{\beta} + \log\abs{c_B} } \\
				- &\br{n\log\abs{\alpha} + \log\abs{c_A}}\\
				&\br{\log\abs{y} + n\log\abs{\beta} + \chi_{\R\backslash\set{\abs{\beta}}}(\abs{\alpha})\log\abs{c_B} - \chi_{\set{\abs{\beta}}}(\abs{\alpha})\log\abs{c_B-c_A} }
			\end{align*}
			and
			\begin{align*}
				R\,b_2 = - &\br{n\log\abs{\alpha} + \log\abs{c_A}}\br{\log\abs{y} + n\log\abs{\beta} + \log\abs{c_B}} \\
				+ &\br{-n\br{\log\abs{\alpha}+\log\abs{\beta}}-\log\abs{c_A}-\log\abs{c_B}}\\
				&\br{\log\abs{y}+\log\abs{\beta}+\chi_{\R\backslash\set{\abs{\beta}}}(\abs{\alpha})\log\abs{c_B} - \chi_{\set{\abs{\beta}}}(\abs{\alpha})\log\abs{c_B-c_A}}.
			\end{align*}
			
			If we multiply the terms out and look at the highest order terms -- those of $n^2$ and $\log\abs{y}\cdot n$, then we see that they have negative sign and are the same for $b_1$ and $b_2$. The other terms all get dominated by the factor $\frac{1}{R}$, which decreases like $\frac{1}{n^2}$, as per Lemma \ref{lem:reg-square}. Thus, $b_1 = b_2$ for sufficiently large $n$. Note that this would mean that all coefficients in $\xi_1$ are $0$ if $\abs{\alpha}<\abs{\beta}$. \par\medskip
			
			If $\abs{\alpha}=\abs{\beta}$ however, $b_1 = b_2$ immediately implies that $\xi_1 \neq 0$, except for when $b_1 = b_2 = -1$, as per our conditions on $\abs{c_B}, \abs{c_B-c_A}$ in Theorem \ref{thm:main}. \par\medskip
			
			Going back to the unit decompositions in Equation \eqref{eq:unitdecomp}, we can rewrite them into
			\[
				\br{x-\lambda_i y}\lambda_i^{-b_1}\br{\lambda_i-A_n}^{-b_2} - 1 = 0
			\]
			for $i\in\set{1,2,3}$. With $b_1 = b_2 = -1$, this is a polynomial equation and the $\lambda_i$ thus roots of the polynomial 
			\[
				\br{-yX+x}X\br{X-A_n}-1,
			\]
			which must then be divisible by the minimal polynomial $X\br{X-A_n}\br{X-B_n}-1$. The constant terms are equal, so the leading coefficients must be, too. This gives $y = -1$ and thus the contradiction to $(x,y)$ being a non-trivial solution. So $j$ cannot be $1$ if $\abs{\alpha} = \abs{\beta}$. \\
			
			Assume, then, $j = 2$ and take $(k,l) = (3,1)$. Equation \eqref{eq:powerLGS2} and Lemma \ref{lem:log-root-approx} give
			\begin{align*}
				R\,b_1 = &\br{n \log\abs{\beta} + \chi_{\R\backslash\set{\abs{\beta}}}(\abs{\alpha})\log\abs{c_B} + \chi_{\set{\abs{\beta}}}(\abs{\alpha})\log\abs{c_B-c_A} } \\
				&\br{\log\abs{y}+n\log\abs{\alpha}+\log\abs{c_A}} \\
				- &\br{n\log\abs{\alpha} + \log\abs{c_A}} \\
				&\br{\log\abs{y} + n\log\abs{\beta} + \chi_{\R\backslash\set{\abs{\beta}}}(\abs{\alpha})\log\abs{c_B} + \chi_{\set{\abs{\beta}}}(\abs{\alpha})\log\abs{c_B-c_A} }
			\end{align*}
			and
			\begin{align*}
				R\, b_2 = -&\br{n\log\abs{\beta}+\log\abs{c_B}}\br{\log\abs{y}+n\log\abs{\alpha}+\log\abs{c_A}}\\
				+&\br{-n\br{\log\abs{\alpha}+\log\abs{\beta}}-\log\abs{c_A}-\log\abs{c_B}}\\
				&\br{\log\abs{y}+n\log\abs{\beta} + \chi_{\R\backslash\set{\abs{\beta}}}(\abs{\alpha})\log\abs{c_B} + \chi_{\set{\abs{\beta}}}(\abs{\alpha})\log\abs{c_B-c_A}}
			\end{align*}
			up to exponentially decreasing errors. Further assume that $\abs{\alpha} \neq \abs{\beta}$ and we look at the logarithms $\log\abs{\alpha}, \log\abs{\beta}$ in $\xi_2$. The term in question is
			\[
				n\br{b_1 - \br{b_2-1}} \log\abs{\alpha} + n\br{2b_1 + b_2 - 1}\log\abs{\beta}
			\]
			and dominates the other terms. Thus, it suffices to show that its highest order terms do not cancel each other out, which is easily verified with the formula derived for $b_1$ and $b_2$. \par\medskip
			
			If $\abs{\alpha} = \abs{\beta}$ instead, the same term simplifies to $3nb_1 \log\abs{\alpha}$ and the formula for $b_1$ itself to $R\, b_1 = \log\abs{y} \br{\log\abs{c_B-c_A} - \log\abs{c_A}}$. We can no longer claim that $3nb_1$ dominates the other coefficients of $\xi_2$ and have to take a closer look. \par\medskip
			
			To that end, we first write $R\, b_2$ explicitly in terms of $n^2, n$ and $\log\abs{y}$. We have
			\begin{align*}
				R\,b_2 &= n^2\br{-3\br{\log\abs{\alpha}}^2} + n \cdot 2\log\abs{\alpha}\br{-\log\abs{c_A}-\log\abs{c_B}-\log\abs{c_B-c_A}} \\
				&+ \log\abs{y}\br{-3n\log\abs{\alpha} - 2\log\abs{c_B} - \log\abs{c_A}}.
			\end{align*}
			
			We plug everything into the linear form $\xi_2$,
			\[
				3nb_1\log\abs{\alpha} + \br{b_1-b_2+1} \log\abs{c_A} + 2b_1 \log\abs{c_B} + \br{b_2-1}\log\abs{c_B-c_A},
			\]
			and group the terms with respect to their order in $n$, differentiating two cases. Either the term with $\frac{1}{R}\log\abs{y}\,n$, the whole term is 
			\[
				\frac{1}{R}\log\abs{y}\,n \cdot 3\log\abs{\alpha}\br{\log\abs{c_A}-\log\abs{c_B-c_A}},
			\]
			is at most constant, equivalently $\log\abs{y} = O(n)$. In that case, since $\log\abs{c_A}-\log\abs{c_B-c_A} > 0$ as per our conditions on $A_n,B_n$, the whole term equals at most some positive constant. While the term $\frac{1}{R} n^2 \cdot 3\log\abs{\alpha}^2\br{\log\abs{c_A}-\log\abs{c_B-c_A}}$ equals some positive constant, and thus their sum. All the other terms decrease with increasing $n$, thus the linear form $\xi_2$ does not cancel for sufficiently large $n$. In the other case, the term with $\frac{1}{R}\log\abs{y}\cdot n$ dominates all others -- including the one with $\frac{1}{R}n^2$ -- and we can conclude $\xi_2 \neq 0$ for sufficiently large $n$ just by the non-vanishing of the $\frac{1}{R}\log\abs{y}\, n$ term. \\
			
			Finally, assume $j = 3$ and take $(k,l) = (2,1)$. We again have, by Equation \eqref{eq:powerLGS2} and Lemma \ref{lem:log-root-approx},
			\begin{align*}
				R\, b_1 = &\br{n \log\abs{\beta} + \chi_{\R\backslash\set{\abs{\beta}}}(\abs{\alpha})\log\abs{c_B} + \chi_{\set{\abs{\beta}}}(\abs{\alpha})\log\abs{c_B-c_A}} \\
				&\br{\log\abs{y}+n\log\abs{\alpha}+\log\abs{c_A}} \\
				- &\big(-n\br{\log\abs{\alpha}+\log\abs{\beta}}-\log\abs{c_A}- \chi_{\R\backslash\set{\abs{\beta}}}(\abs{\alpha})\log\abs{c_B} \\ &+\chi_{\set{\abs{\beta}}}(\abs{\alpha})\log\abs{c_B-c_A} \big) \br{\log\abs{y}+n\log\abs{\beta}+\log\abs{c_B}}
			\end{align*}
			and
			\begin{align*}
				R\,b_2 = -&\br{n\log\abs{\beta}+\log\abs{c_B}}\br{\log\abs{y}+n\log\abs{\alpha}+\log\abs{c_A}}\\
				+&\br{n\log\abs{\alpha}+\log\abs{c_A}}\br{\log\abs{y}+n\log\abs{\beta}+\log\abs{c_B}}
			\end{align*}
			up to exponentially decreasing errors. Again, if $\abs{\alpha}\neq\abs{\beta}$, the coefficients of $\log\abs{\alpha},\log\abs{\beta}$ dominate the others in $\xi_3$, the term in question being
			\[
				n\br{b_2-\br{b_1-1}}\log\abs{\alpha} + n\br{2b_2 + \br{b_1-1}}\log\abs{\beta}.
			\]
			It follows from the formulas for $b_1, b_2$ that its highest order terms do not cancel each other out and we can thus conclude $\xi_3\neq 0$. \par\medskip
			
			If $\abs{\alpha}=\abs{\beta}$, this again fails, as the term simplifies to $3nb_2\log\abs{\alpha}$ and $b_2$ to $R\, b_2 = \log\abs{y}\br{\log\abs{c_A}-\log\abs{c_B}}$. We state $R\, b_1$ in terms of $n^2$, $n$, and $\log\abs{y}$ and get
			\begin{align*}
				R\, b_1 &= n^2\br{3\br{\log\abs{\alpha}}^2} + n \cdot 2\log\abs{\alpha}\br{\log\abs{c_A}+\log\abs{c_B}+\log\abs{c_B-c_A}} \\
				&+ \log\abs{y}\br{3n\log\abs{\alpha} + \log\abs{c_A}+2\log\abs{c_B-c_A}}.
			\end{align*}
			We then plug everything into the linear form $\xi_3$,
			\[
				3nb_2\log\abs{\alpha} + \br{b_2+b_1-1}\log\abs{c_A} - \log\abs{c_B} + 2b_2 \log\abs{c_B-c_A},
			\]
			and conclude analogously to the case $j=2$ that $\xi_3 \neq 0$. It should be noted that $\log\abs{c_A}-\log\abs{c_B}$ is non-zero, but could be either positive or negative. However, the factor occurs in both the terms with $\frac{1}{R}n^2$ and $\frac{1}{R}\log\abs{y}\,n$ and is the only one of questionable sign, thus both terms have the same sign and do not cancel with the same argument as for the case $j=2$.
		\end{proof}
	
		With $\xi_j \neq 0$, we can apply Theorem \ref{thm:baker1993} and derive a lower bound for $\log\abs{\xi}$. The coefficients of $\xi_j$ can be bounded by $n \cdot \max\set{b_1,b_2}$. By Equation \eqref{eq:powerLGS2}, in combination with $R$ being quadratic by Lemma \ref{lem:reg-square} and $\log\abs{y}$ being at most of order $n^4\log n$ by Lemma \ref{lem:logy-upper}, we have $B \ll n^4\log n$ in Theorem \ref{thm:baker1993}. The terms $c_A,c_B$ are at most polynomial in $n$, thus their logarithmic heights are of order at most $\log n$, i.e. $h\br{\abs{c_A}},h\br{\abs{c_B}},h\br{\abs{c_B-c_A}} \ll \log n$, and the logarithmic heights of $\abs{\alpha}, \abs{\beta}$ are constants. \par\medskip
		
		Everything put together, Theorem \ref{thm:baker1993} gives
		\begin{equation*}
			\log\abs{\xi_j} \gg - \br{\log n}^3 \cdot \log\br{ n^4 \log n }.
		\end{equation*}
		If we take the logarithm of the upper bound from Equation \eqref{eq:xi_j-ub}, we get
		\[
		-n \gg -\br{\log n}^3 \cdot \log\br{n^4\log n},
		\]
		which gives us a contradiction for sufficiently large $n$, or in turn an effective upper bound for $n$. We thus conclude our proof of Theorem \ref{thm:main}.
		
		\par\medskip
		\centerline{\bf Acknowledgement}
		\noindent
		The author wants to thank Volker Ziegler for the initial idea as well as many helpful discussions and feedback.
		

\end{document}